\theoremstyle{plain}
\def\Box{\vcenter{\vbox{\hrule\hbox{\vrule
				\vbox to 8.8pt{\hbox to 10pt{}\vfill}\vrule}\hrule}}}
\newtheorem{theorem}{Theorem}[section]
\newtheorem{definition}[theorem]{Definition}
\newtheorem{example}[theorem]{Example}
\newtheorem{lemma}[theorem]{Lemma}
\newtheorem{remark}[theorem]{Remark}
\begin{document}

%

\title{$m$-weak group MP inverse
}

\author{\\Wanlin Jiang$^{a}$, Jiale Gao$^{*,b}$, Xiangyu Zhang$^{c}$ and Shengxi Zuo$^{d}$\\
\small $^{a}$School of Science and Technology, College of Arts and Science of Hubei Normal University, Huangshi, 435109, China\\
\small $^{b}$Department of Mathematics, Shanghai University, Shanghai 200444, China\\
\small $^{c}$School of Mathematics and Statistics, Hubei Normal University, Huangshi, 435002, China\\
\small $^{d}$University of Warwick, Coventry, CV4 7AL, United Kingdom\\
}

\date{}
\maketitle
\begin{abstract} In this paper, we introduce a new matrix decomposition called the $m$-Core-nilpotent decomposition which is an extension of the Core-nilpotent decomposition. By this new decomposition, we propose a new generalized inverse named the $m$-weak group MP inverse which unifies the DMP-inverse and weak core inverse. Some characterizations, properties and representations of the $m$-weak group MP inverse are presented. In addition, the proposed generalized inverse is applicable to solving a restricted matrix equation.
\end{abstract}

$\mathbf{Keywords:}$ $m$-Core-nilpotent decomposition; $m$-weak group inverse; $m$-weak group MP inverse; weak core inverse; DMP-inverse

$\mathbf{Mathematics}$ $\mathbf{Subject}$ $\mathbf{Classfication:}$ 15A09

\let\thefootnote\relax\footnotetext{$^*$E-mail address: jialegao819@163.com}

\let\thefootnote\relax\footnotetext{This research is supported by NSFC(NO.11961076)}

\section{Introduction}\numberwithin{equation}{section}
Let $\mathbb{Z}^{+}$ and $\mathbb{C}^{m\times n}$ denote the sets of all positive integers and all $m\times n$ complex matrices, respectively. The symbols $r(A)$, $\mathcal{R}(A)$, $\mathcal{N}(A)$ and $A^{*}$ denote the rank, range space, null space and conjugate transpose of $A\in\mathbb{C}^{m\times n}$, respectively. The smallest nonnegative integer $k$ such that $r(A^{k+1})=r(A^{k})$ is called the index of $A\in\mathbb{C}^{n\times n}$ and is denoted by ${\rm Ind}(A)$. If ${\rm Ind}(A)\leq1$, then $A$ is called a core matrix. The notation $\mathbb{C}^{n\times n}_{k}$ denotes the set of all $n\times n$ complex matrices with index $k$ . The symbols $I_{n}$ and 0 stand for the identity matrix of order $n$ and null matrix of an appropriate order, respectively.
The symbols $\mathbb{C}^{\rm CM}_{n}$ and $\mathbb{C}^{\rm EP}_{n}$ represent the subsets of $\mathbb{C}^{n\times n}$ consisting of all core matrices and EP matrices, respectively, i.e.,
\begin{eqnarray} \nonumber
&& \mathbb{C}^{\rm CM}_{n}=\{A|A\in\mathbb{C}^{n\times n}, r(A)=r(A^{2})\},\\  \nonumber
&&\mathbb{C}^{\rm EP}_{n}=\{A|A\in\mathbb{C}^{n\times n},\mathcal{R}(A)= \mathcal{R}(A^{*})\}. \nonumber
\end{eqnarray}

The well-known Moore-Penrose inverse (in short, MP-inverse $A^{\dag}$) \cite{P,WWQ}, Drazin inverse $A^{D}$ \cite{MBM,D,WSP,CW} and group inverse $A^{\#}$ \cite{IE} have been extensively studied for decades since 1920. As an alternative to the group inverse, the core inverse which exists only for the square matrix with the index $k\leq1$ was introduced by Baksalary and Trenkler \cite{BT} in 2010, and a new wave of studying the core inverse and its extensions \cite{PM,MT,BTT,DFAN} was initiated. From 2014 onwards, the core-EP inverse \cite{PM}, DMP-inverse \cite{MT}, BT-inverse \cite{BTT} and weak core inverse \cite{DFAN} were introduced to extend the core inverse to the square matrix with arbitrary index $k$. More characterizations and applications about these generalized inverses can be found in \cite{FZC,DMPS,MM,ZCC,ZC,FLT1,LZZ,HK}. Recently, the $m$-weak core inverse \cite{WHN} unifies the weak core inverse and core-EP inverse.

  Let $\mathbb{C}^{n}$ be the $n$-dimensional complex vector space and let $P_{\mathcal{L},\mathcal{M}}$ be the projector on the space $\mathcal{L}$ along the $\mathcal{M}$ such that $\mathcal{L}\oplus \mathcal{M}=$$\mathbb{C}^{n}$. For $A\in\mathbb{C}^{m\times n}$, $P_{A}$ represents the orthogonal projection onto $\mathcal{R}(A)$, i.e., $P_{A}=P_{\mathcal{R}(A)}=AA^{\dag}$. For $A\in\mathbb{C}^{n\times n}_{k}$ and $m\in\mathbb Z^{+}$, the representations of the core-EP inverse, DMP-inverse, weak group inverse and $m$-weak group inverse, which are denoted by $A^{\textcircled{\emph{\dag}}}$, $A^{D,\dag}$, $A^{\textcircled{w}}$ and $A^{\textcircled{w}_{m}}$, respectively, are as follows \cite{flt3,MT,WC,ZCZ,WL,WLJ,ZCZM}:
\begin{equation}\label{eq.1}
A^{\textcircled{\emph{\dag}}}=(A^{k+1}(A^{k})^{\dag})^{\dag}=A^{D}P_{A^{k}},
\end{equation}
\begin{equation}\label{eq.2}
A^{D,\dag}=A^{D}P_{A},
\end{equation}
\begin{equation}\label{eq.3}
 A^{\textcircled{w}}=(A^{\textcircled{\emph{\dag}}})^{2}A,
\end{equation}
\begin{equation}\label{eq.4}
 A^{\textcircled{w}_{m}}=(A^{\textcircled{\emph{\dag}}})^{m+1}A^{m}.
\end{equation}

\begin{remark}\label{rem1.1}
From \cite{WLJ}, it is known that the $m$-weak group inverse unifies the weak group inverse and Drazin inverse. If $m=1$, then $A^{\textcircled{w}_{1}}$ coincides with $A^{\textcircled{w}}$. If $m\geq k$, then $A^{\textcircled{w}_{m}}$ coincides with $A^{D}.$
\end{remark}

In 2020, Ferreyra et al. \cite{DFAN} introduced the weak core inverse by using the MP-inverse and Drazin inverse. The weak core inverse of $A\in\mathbb{C}^{n\times n}_{k}$, denoted by $A^{\textcircled{w},\dag}$, is the unique matrix $X\in\mathbb{C}^{n\times n}$ satisfying
\begin{equation}\label{eq1}
    XAX=X, AX=CA^{\dag}\  \mbox{and}\  XA=A^{D}C,
\end{equation}
where $C=AA^{\textcircled{w}}A\in\mathbb{C}^{\rm CM}_{n}$. Meanwhile,
\begin{equation}\label{eq.6}
A^{\textcircled{w},\dag}=A^{\textcircled{w}}P_{A}.
 \end{equation}

Soon afterwards, the Ferreyra and Trenkler \cite{WHN} defined the $m$-weak core inverse of $A\in\mathbb{C}^{n\times n}_{k}$ as
\begin{equation}\label{eq.7}
A^{\textcircled{\#}_{m}}=A^{\textcircled{w}_{m}}P_{A^{m}}.
 \end{equation}
From $(\ref{eq.1})$, $(\ref{eq.6})$, $(\ref{eq.7})$ and Remark $\ref{rem1.1}$, we know that if $m=1$, then $A^{\textcircled{\#}_{1}}=A^{\textcircled{w},\dag}.$ If $m\geq k$, then $A^{\textcircled{\#}_{m}}=A^{D}P_{A^{k}}=A^{\textcircled{\emph{\dag}}}.$ Therefore the $m$-weak core inverse extends the notions of the core-EP inverse and weak core inverse.

From $(\ref{eq.2})$ and Remark $\ref{rem1.1}$, we notice that if $A^{\textcircled{w}}$ is replaced by $A^{\textcircled{w}_{m}}$ in $(\ref{eq.6})$, then $A^{\textcircled{w}_{m}}P_{A}$ coincides with $A^{\textcircled{w},\dag}$ and $A^{D,\dag}$ when $m=1$ and $m\geq k-1$, respectively. Therefore, it is natural to investigate $A^{\textcircled{w}_{m}}P_{A}$ .

Motivated by the ideas above, our aim is to define and investigate the new generalized inverse which unifies the DMP-inverse and weak core inverse. The major outcomes of this paper are as follows.

 $(a)$ A new matrix decomposition called the $m$-core-nilpotent decomposition is proposed and it coincides with the core-nilpotent decomposition when $m\geq k-1$. Interestingly, the $m$-core-nilpotent decomposition can be applicable to defining $A^{\textcircled{w}_{m}}P_{A}$, which is named the $m$-weak group MP inverse.

 $(b)$ Some characterizations of the $m$-weak group MP inverse are derived in terms of the range space, null space, rank equalities, matrix equations
 and projectors, respectively. In addition, some expressions of the $m$-weak group MP inverse are given.

 $(c)$ The $m$-weak group MP inverse is used to solve a restricted matrix equation. Meanwhile, a nonsingular bordered matrix constructed by using the null space and range space of the $m$-weak group MP inverse can be applied to the Cramer's rule for the solution of the mentioned restricted matrix equation.

An outline of this paper is as follows. Some symbols and lemmas are given in Section 2. We devote Section 3 to the introduction of the $m$-core-nilpotent decomposition and the $m$-weak group MP inverse. We focus on the characterizations and expressions of the $m$-weak group MP inverse in Section 4 and Section 5, respectively. Finally, the applications of this new generalized inverse in solving the restricted matrix equation are introduced in Section 6.

\section{Preliminaries}
In this part, we show some necessary symbols and results.

For $A\in\mathbb{C}^{m\times n}$ with $r(A)=r$, the matrix $X\in\mathbb{C}^{n\times m}$ is denoted by $A^{(2)}_{\mathcal{T},\mathcal{S}}$ \cite{WWQ}, such that
$$
X=XAX,\ \mathcal{R}(X)=\mathcal{T} \mbox{ and}\ \mathcal{N}(X)=\mathcal{S},
$$
 where $\mathcal{T}$ is the subspaces of $\mathbb{C}^{n}$ of dimensions $s\ (\leq r)$ and $\mathcal{S}$ is the subspaces of $\mathbb{C}^{m}$ of dimensions $m-s$.

\begin{lemma}{\rm\cite{flt3}}\label{l2.3}
Let $A\in\mathbb{C}^{n\times n}_{k}$. Then the following statements are valid:

$(a)$ $A^{D}=A^{(2)}_{\mathcal{R}{(A^{k})},\mathcal{N}{(A^{k})}};$

$(b)$ $AA^{D}=A^{D}A=P_{\mathcal{R}{(A^{k})},\mathcal{N}{(A^{k})}}.$

\end{lemma}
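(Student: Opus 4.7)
The plan is to read off both claims from the three defining properties of the Drazin inverse, namely $A^{D}AA^{D}=A^{D}$, $AA^{D}=A^{D}A$, and $A^{k+1}A^{D}=A^{k}$, without invoking anything deeper. An equivalent route would be to pass through a core-nilpotent block decomposition of $A$ that separates the invertible and nilpotent parts, turning $A^{D}$, $A^{k}$, and $AA^{D}$ into transparent block-diagonal matrices; I mention this as a cross-check, but prefer the purely algebraic route since it avoids fixing a basis.

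For (a), the identity $A^{D}AA^{D}=A^{D}$ already furnishes $X=XAX$. To pin down $\mathcal{R}(A^{D})$ and $\mathcal{N}(A^{D})$, I would first establish the auxiliary identity $A^{D}=(A^{D})^{j+1}A^{j}$ for every $j\ge 0$ by induction, using $AA^{D}=A^{D}A$ to commute factors of $A$ past factors of $A^{D}$. Applied with $j=k$, this yields $\mathcal{R}(A^{D})\subseteq\mathcal{R}(A^{k})$ together with $\mathcal{N}(A^{k})\subseteq\mathcal{N}(A^{D})$. The reverse inclusions both drop out of $A^{k}=A^{k+1}A^{D}=A^{D}A^{k+1}$: the range inclusion from the rightmost form and the null-space inclusion from the middle one.

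For (b), the equality $AA^{D}=A^{D}A$ is itself a defining property, and $(AA^{D})^{2}=A(A^{D}AA^{D})=AA^{D}$ shows the matrix is idempotent. By part (a) its range lies inside $\mathcal{R}(A^{D})=\mathcal{R}(A^{k})$, while the rearrangement $A^{k}=A^{k}AA^{D}$ of $A^{k+1}A^{D}=A^{k}$ shows every column of $A^{k}$ lies in $\mathcal{R}(AA^{D})$. For the kernel, if $AA^{D}x=0$ then $A^{j}A^{D}x=A^{j-1}(AA^{D}x)=0$ for all $j\ge 1$, and taking $j=k+1$ yields $A^{k}x=0$; the reverse inclusion is immediate from $\mathcal{N}(A^{D})=\mathcal{N}(A^{k})$. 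Since a projector on $\mathbb{C}^{n}$ is determined by its range and kernel, $AA^{D}=P_{\mathcal{R}(A^{k}),\mathcal{N}(A^{k})}$, and the chain of equalities in (b) is complete.

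I do not foresee a genuine obstacle; the whole argument is bookkeeping around the three defining Drazin identities. The only step requiring care is the inductive formula $A^{D}=(A^{D})^{j+1}A^{j}$, whose derivation must consistently use $AA^{D}=A^{D}A$ to migrate each power of $A$ to the correct side before absorbing it into the outer-inverse identity.
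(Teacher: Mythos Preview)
Your argument is correct. The three Drazin identities indeed yield everything: the inductive formula $A^{D}=(A^{D})^{j+1}A^{j}$ follows by repeatedly substituting $A^{D}=(A^{D})^{2}A$ into itself, and from there the range and null-space identifications for both $A^{D}$ and $AA^{D}$ go through exactly as you describe.

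There is, however, nothing to compare against: the paper does not prove this lemma at all. It is stated in the Preliminaries section with a citation to Ferreyra, Levis, and Thome and invoked as a known fact. Your self-contained derivation from the defining Drazin equations is the standard textbook route and is entirely adequate; the alternative you mention (block-diagonalising via the core--nilpotent decomposition, so that $A^{D}$ becomes the inverse on the invertible block and zero on the nilpotent block) is the other common approach and would give the same conclusions by inspection. Either is fine here; your algebraic route has the virtue of being coordinate-free.
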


\begin{lemma}{\rm\cite{WLJ}}\label{l2.4}
Let $A\in\mathbb{C}^{n\times n}_{k}$ and $m\in\mathbb{Z^{+}}$. Then the following statements are valid:

$(a)$ $A^{\textcircled{w}_{m}}=A^{(2)}_{\mathcal{R}{(A^{k})},\mathcal{N}{((A^{k})^{*}A^{m})}};$

$(b)$ $AA^{\textcircled{w}_{m}}=P_{\mathcal{R}(A^{k}),\mathcal{N}((A^{k})^{*}A^{m})};$

$(c)$ $A^{\textcircled{w}_{m}}A=P_{\mathcal{R}(A^{k}),\mathcal{N}((A^{k})^{*}A^{m+1})}.$
\end{lemma}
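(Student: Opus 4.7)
My plan is to first rewrite $A^{\textcircled{w}_{m}}$ in a compact closed form that exposes its structure, and then read off the three assertions from it. Combining (\ref{eq.1}) and (\ref{eq.4}) gives $A^{\textcircled{w}_{m}}=(A^{D}P_{A^{k}})^{m+1}A^{m}$. Since $\mathcal{R}(A^{D})=\mathcal{R}(A^{k})=\mathcal{R}(P_{A^{k}})$, the absorption identity $P_{A^{k}}A^{D}=A^{D}$ holds, and iterating it through the product collapses all interior projectors to yield
\begin{equation*}
A^{\textcircled{w}_{m}}=(A^{D})^{m+1}P_{A^{k}}A^{m}.
\end{equation*}
Throughout I will rely on the facts from Lemma \ref{l2.3} that $AA^{D}=P_{\mathcal{R}(A^{k}),\mathcal{N}(A^{k})}$ and, consequently, $\mathcal{N}(A^{D})=\mathcal{N}(A^{k})$.

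For the null space statements I argue uniformly. Since $A^{D}$ acts as a bijection on $\mathcal{R}(A^{k})$, one has $\mathcal{N}((A^{D})^{j})=\mathcal{N}(A^{k})$ for every $j\geq 1$. Hence $(A^{D})^{m+1}P_{A^{k}}A^{m}x=0$ forces $P_{A^{k}}A^{m}x\in\mathcal{R}(A^{k})\cap\mathcal{N}(A^{k})=\{0\}$, so $P_{A^{k}}A^{m}x=0$. Because $\mathcal{N}(P_{A^{k}})=\mathcal{R}(A^{k})^{\perp}=\mathcal{N}((A^{k})^{*})$, this is equivalent to $(A^{k})^{*}A^{m}x=0$. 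Replacing $A^{m}$ by $A^{m+1}$ supplies the null space needed for (c) without any change of argument.

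The algebraic heart of the proof is the outer-inverse identity $A^{\textcircled{w}_{m}}AA^{\textcircled{w}_{m}}=A^{\textcircled{w}_{m}}$. I expand it via the compact formula and invoke (i) $A^{m+1}(A^{D})^{m+1}=(AA^{D})^{m+1}=AA^{D}$, from commutativity of $A$ and $A^{D}$ together with the idempotency of $AA^{D}$, and (ii) $AA^{D}P_{A^{k}}=P_{A^{k}}$, which holds because $AA^{D}$ restricts to the identity on $\mathcal{R}(A^{k})=\mathcal{R}(P_{A^{k}})$; together these collapse the middle of the product to a single $P_{A^{k}}$, proving the identity. It then follows at once that $AA^{\textcircled{w}_{m}}$ and $A^{\textcircled{w}_{m}}A$ are idempotent, hence projectors. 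To identify their ranges as $\mathcal{R}(A^{k})$, I test them on a vector $A^{k}z\in\mathcal{R}(A^{k})$: using $P_{A^{k}}A^{j+k}=A^{j+k}$ (since $A^{j+k}z\in\mathcal{R}(A^{k})$) and $(A^{D})^{j}A^{j+k}=A^{k}$ (a standard Drazin identity derived from $A^{D}A^{k+1}=A^{k}$), each of $AA^{\textcircled{w}_{m}}(A^{k}z)$ and $A^{\textcircled{w}_{m}}A(A^{k}z)$ reduces to $A^{k}z$. This simultaneously produces parts (b) and (c), and part (a) follows immediately since $\mathcal{R}(A^{\textcircled{w}_{m}})=\mathcal{R}(A^{\textcircled{w}_{m}}A)=\mathcal{R}(A^{k})$ by the outer-inverse property. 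The only delicate point, and what I expect to be the main obstacle, is the bookkeeping in the central identity: at each step one must decide whether a given $P_{A^{k}}$ should be absorbed to the left into a neighbouring $A^{D}$ (via $P_{A^{k}}A^{D}=A^{D}$) or to the right into a high power of $A$ (via $P_{A^{k}}A^{k+j}=A^{k+j}$), because $P_{A^{k}}$ simplifies asymmetrically in these two directions.
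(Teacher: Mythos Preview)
Your proof is correct. The paper does not prove Lemma~\ref{l2.4} at all; it is quoted from \cite{WLJ} and used as a black box, so there is no in-paper argument to compare against directly. That said, the ambient method in both this paper and the cited source is the core-EP decomposition (Lemma~\ref{l2.33}): one writes $A$ in the block form $U\begin{bmatrix}T & S\\ 0 & N\end{bmatrix}U^{*}$, computes $A^{\textcircled{w}_{m}}$ explicitly as in $(\ref{e.mwg})$, and reads off ranges, null spaces and the projector identities by block inspection. Your route is genuinely different: you stay coordinate-free, derive the compact form $A^{\textcircled{w}_{m}}=(A^{D})^{m+1}P_{A^{k}}A^{m}$, and reduce everything to the absorption rules $P_{A^{k}}A^{D}=A^{D}$, $AA^{D}P_{A^{k}}=P_{A^{k}}$ and $A^{m+1}(A^{D})^{m+1}=AA^{D}$. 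This buys a cleaner, decomposition-free argument that transfers more readily to operator or ring settings; the block approach buys effortless verification once the canonical form is on the table. Two cosmetic points worth tightening when you write it up: (i) for the range equality you only exhibit $\mathcal{R}(A^{k})\subseteq\mathcal{R}(AA^{\textcircled{w}_{m}})$ by the fixed-vector test, so add the one-line observation $\mathcal{R}(A^{\textcircled{w}_{m}})\subseteq\mathcal{R}((A^{D})^{m+1})=\mathcal{R}(A^{k})$ for the reverse inclusion; (ii) in the computation of $AA^{\textcircled{w}_{m}}(A^{k}z)$ the exponents do not literally match your identity $(A^{D})^{j}A^{j+k}=A^{k}$, so either commute the leading $A$ past $(A^{D})^{m+1}$ first or note that $(A^{D})^{m+1}A^{m+k}=A^{D}A^{k}$ and then $A\cdot A^{D}A^{k}=A^{k}$.
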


\begin{lemma}{\rm{\cite{MBM}}} \label{l2.2222}
Let $A\in\mathbb{C}^{n\times n}_{k}$. Then $A$ can be uniquely expressed as $A=\tilde{{A}_{1}}+\tilde{{A}_{2}}$, where
\begin{eqnarray} \nonumber
&& (a)\ \tilde{{A}_{1}}\in\mathbb{C}^{\rm CM}_{n};\\  \nonumber
&& (b)\ (\tilde{{A}_{2}})^{k}=0;\\  \nonumber
&& (c)\ \tilde{{A}_{2}}\tilde{{A}_{1}}=0;\\  \nonumber
&& (d)\ \tilde{{A}_{1}}\tilde{{A}_{2}}=0.  \nonumber
\end{eqnarray}
 The above $A=\tilde{{A}_{1}}+\tilde{{A}_{2}}$ is called the core-nilpotent decomposition of $A$, in which case $\tilde{{A}_{1}}$ and $\tilde{{A}_{2}}$ are called the core part and nilpotent part in the core-nilpotent decomposition of $A$, respectively.

\end{lemma}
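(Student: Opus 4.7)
The plan is to establish existence constructively through the Drazin inverse and then extract uniqueness from the four defining properties. For existence, I would set
\[
\tilde{A_1}:=A^{2}A^{D}=AA^{D}A\quad\text{and}\quad \tilde{A_2}:=A-A^{2}A^{D}=A(I-AA^{D}),
\]
exploiting the commutativity $AA^{D}=A^{D}A$ and the identity $A^{k+1}A^{D}=A^{k}$ throughout. This ansatz is natural because Lemma \ref{l2.3}(b) tells us $AA^{D}$ is the oblique projector onto $\mathcal{R}(A^{k})$ along $\mathcal{N}(A^{k})$, so splitting $A=A\cdot AA^{D}+A\cdot(I-AA^{D})$ is a Fitting-type splitting of $A$ along these complementary subspaces.

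Verifying the four properties then proceeds in order. For (a), I would note $\mathcal{R}(\tilde{A_1})=A\mathcal{R}(AA^{D})=A\mathcal{R}(A^{k})=\mathcal{R}(A^{k+1})=\mathcal{R}(A^{k})$, while $\tilde{A_1}^{2}=A^{2}A^{D}A^{2}A^{D}=A^{3}A^{D}$ has range $A\mathcal{R}(A^{k})=\mathcal{R}(A^{k})$ as well, whence $r(\tilde{A_1})=r(\tilde{A_1}^{2})$ and $\tilde{A_1}\in\mathbb{C}^{\rm CM}_{n}$. For (b), since $AA^{D}$ commutes with $A$ and $I-AA^{D}$ is idempotent,
\[
\tilde{A_2}^{k}=A^{k}(I-AA^{D})=A^{k}-A^{k+1}A^{D}=A^{k}-A^{k}=0.
\]
For (c) and (d), the products $\tilde{A_2}\tilde{A_1}$ and $\tilde{A_1}\tilde{A_2}$ both reduce via $A^{D}AA^{D}=A^{D}$ and the commutativity of $AA^{D}$ with $A$ to cancellations of the form $A^{j}A^{D}-A^{j}A^{D}=0$.

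For uniqueness, suppose $A=B_{1}+B_{2}$ is another decomposition satisfying (a)--(d). Properties (c) and (d) give $B_{1}B_{2}=B_{2}B_{1}=0$, so $B_{1}$ and $B_{2}$ commute; hence the Drazin inverse is additive on this sum. Since $B_{2}$ is nilpotent, $B_{2}^{D}=0$, and since $B_{1}$ is a core matrix, $B_{1}^{D}=B_{1}^{\#}$. Thus $A^{D}=B_{1}^{\#}$. Now compute
\[
A^{2}A^{D}=(B_{1}+B_{2})^{2}B_{1}^{\#}=B_{1}^{2}B_{1}^{\#}+B_{2}^{2}B_{1}^{\#},
\]
using $B_{1}B_{2}=B_{2}B_{1}=0$ to discard cross terms. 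The first summand equals $B_{1}$, and the second vanishes because $B_{1}^{\#}=B_{1}^{\#}B_{1}B_{1}^{\#}$ lets us write $B_{2}B_{1}^{\#}=B_{2}B_{1}^{\#}B_{1}B_{1}^{\#}=(B_{2}B_{1}^{\#}B_{1})B_{1}^{\#}=0$ once one checks $B_{2}B_{1}^{\#}B_{1}=0$ from $B_{2}B_{1}=0$ and the commutativity $B_{1}^{\#}B_{1}=B_{1}B_{1}^{\#}$. Consequently $B_{1}=A^{2}A^{D}=\tilde{A_1}$ and $B_{2}=\tilde{A_2}$.

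The main obstacle I anticipate is the bookkeeping in the uniqueness step: one must derive $B_{2}B_{1}^{\#}=0$ purely from the hypotheses $B_{1}B_{2}=B_{2}B_{1}=0$ together with the core-matrix identities, without accidentally assuming what is to be proved. The existence half is essentially routine algebra with Drazin-inverse identities, but the key conceptual point there is recognising the range identification $\mathcal{R}(\tilde{A_1})=\mathcal{R}(A^{k})$, since this is what makes the rank computation in (a) transparent rather than a brute-force manipulation.
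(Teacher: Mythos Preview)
Your proof is correct. Note, however, that the paper does not itself prove this lemma: it is quoted from \cite{MBM} as background, with no argument given. The paper does confirm your explicit form $\tilde{A_1}=AA^{D}A$ in the discussion opening Section~3, so your construction is exactly the classical one the authors have in mind.

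One small remark on presentation: in the uniqueness step you invoke the additivity of the Drazin inverse under the hypothesis $B_1B_2=B_2B_1=0$. This is a standard result, but since it is not stated anywhere in the present paper it would be cleaner either to cite it explicitly or to bypass it. In fact you can avoid the additivity theorem entirely: from $B_2B_1=0$ and nilpotency of $B_2$ one gets $A^{j}=(B_1+B_2)^{j}=B_1^{j}+B_2^{j}+(\text{terms with a factor }B_2B_1)=B_1^{j}+B_2^{j}$ for all $j\geq 1$, hence $A^{k}=B_1^{k}$; since $B_1$ is a core matrix this gives $\mathcal{R}(B_1)=\mathcal{R}(B_1^{k})=\mathcal{R}(A^{k})$, and then $AA^{D}B_1=B_1$ while $AA^{D}B_2=0$ (because $B_2^{k}=0$ forces $\mathcal{R}(B_2)\subseteq\mathcal{N}(A^{k})$ via $A^{k}B_2=B_1^{k}B_2=0$), so $\tilde{A_1}=AA^{D}A=AA^{D}(B_1+B_2)=B_1$. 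This is essentially the same argument the paper uses for the uniqueness half of its Theorem~\ref{thm3.1}, specialised to $m\geq k$.
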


\begin{lemma}{\rm{\cite{W}}} \label{l2.33}
Let $A\in\mathbb{C}^{n\times n}_{k}$. Then there exists a unitary matrix $U\in\mathbb{C}^{n\times n}$ such that
\begin{equation}\label{eq1.1}
A=\hat{{A}_{1}}+\hat{{A}_{2}}=U \left[\begin{array}{cc}
T & S \\
0 & N \\
\end{array}
\right] U^{*},
\end{equation}
\begin{equation}\label{e2.2}
\hat{A_{1}}=U \left[\begin{array}{cc}
T & S \\
0 & 0 \\
\end{array}
\right] U^{*}, \ \ \
\hat{A_{2}}=U \left[\begin{array}{cc}
0 & 0 \\
0 & N \\
\end{array}
\right] U^{*},
\end{equation}
where $T\in\mathbb{C}^{t\times t}$ is nonsingular with $t=r(T)=r(A^{k})$ and $N$ is nilpotent of index $k$. The representation $(\ref{eq1.1})$ is called the core-EP decomposition of $A$ and is unique, in which case $\hat{A_{1}}$ and $\hat{A_{2}}$ are called the core part and nilpotent part in the core-EP decomposition of $A$, respectively.
\end{lemma}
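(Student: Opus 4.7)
The plan is to construct $U$ explicitly from an orthonormal basis of $\mathcal{R}(A^{k})$ extended to $\mathbb{C}^{n}$, exploiting that $\mathcal{R}(A^{k})$ is $A$-invariant. Concretely, set $t = r(A^{k})$, pick an orthonormal basis $\{u_{1}, \ldots, u_{t}\}$ of $\mathcal{R}(A^{k})$, extend it to an orthonormal basis $\{u_{1}, \ldots, u_{n}\}$ of $\mathbb{C}^{n}$, and let $U$ be the unitary matrix whose columns are these vectors. Because $A \mathcal{R}(A^{k}) = \mathcal{R}(A^{k+1}) \subseteq \mathcal{R}(A^{k})$, the similarity $U^{*} A U$ automatically has block-upper-triangular form $\begin{bmatrix} T & S \\ 0 & N \end{bmatrix}$ with $T \in \mathbb{C}^{t \times t}$.

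Next I would verify the structural conditions. Since $\mathrm{Ind}(A) = k$ gives $r(A^{k+1}) = r(A^{k})$, combined with $\mathcal{R}(A^{k+1}) \subseteq \mathcal{R}(A^{k})$ we get $\mathcal{R}(A^{k+1}) = \mathcal{R}(A^{k})$, so $A$ acts surjectively (hence bijectively) on $\mathcal{R}(A^{k})$; this forces $T$ to be invertible. For $N$, I would use the rank identity for block-triangular matrices with invertible top-left block,
\begin{equation*}
r(A^{j}) = r(T^{j}) + r(N^{j}) = t + r(N^{j}).
\end{equation*}
Setting $j = k$ gives $r(N^{k}) = 0$, so $N$ is nilpotent; setting $j = k-1$ and using the strict inequality $r(A^{k-1}) > r(A^{k})$ built into $\mathrm{Ind}(A) = k$ (for $k \geq 1$) forces $N^{k-1} \neq 0$, so $N$ has index exactly $k$. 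Defining $\hat{A}_{1}$ and $\hat{A}_{2}$ as in the statement then completes the existence part.

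For uniqueness, the key intrinsic data are $\mathcal{R}(\hat{A}_{1}) = \mathcal{R}(A^{k})$ and the orthogonality $\hat{A}_{2}^{*} \hat{A}_{1} = 0$, both immediate from the block form. Given a second decomposition $A = B_{1} + B_{2}$ with $B_{1}$ core, $B_{2}$ nilpotent, and the analogous block-orthogonality coming from a unitary change of coordinates, the orthogonality together with the range identification pin $\mathcal{R}(B_{1}) = \mathcal{R}(A^{k})$, and $B_{1}$ restricted to $\mathcal{R}(A^{k})$ must agree with $A$ restricted to this $A$-invariant subspace. This identifies $B_{1} = \hat{A}_{1}$ and hence $B_{2} = \hat{A}_{2}$.

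The main obstacle I expect is the uniqueness step: the block entries $T$, $S$, $N$ depend on the choice of orthonormal basis inside $\mathcal{R}(A^{k})$ and its orthogonal complement, so different $U$ produce different blocks while yielding the same $\hat{A}_{1}, \hat{A}_{2}$. Uniqueness must therefore be phrased at the matrix level rather than the block level, and it ultimately rests on isolating a short intrinsic list of properties (range identification plus the orthogonality $\hat{A}_{2}^{*}\hat{A}_{1} = 0$) sufficient to reconstruct $\hat{A}_{1}$ from $A$ alone.
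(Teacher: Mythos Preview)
The paper does not supply its own proof of this lemma: it is stated in the Preliminaries section as a citation from Wang \cite{W}, so there is nothing in the present paper to compare your argument against. Your existence argument (choose an orthonormal basis of $\mathcal{R}(A^{k})$, extend, and use $A$-invariance of $\mathcal{R}(A^{k})$ to get the block-triangular form; then read off invertibility of $T$ and the nilpotency index of $N$ from the rank identity $r(A^{j})=t+r(N^{j})$) is the standard construction and is correct.

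Your uniqueness sketch has a small gap. Knowing that $B_{1}$ agrees with $A$ on the invariant subspace $\mathcal{R}(A^{k})$ only determines $B_{1}$ on $\mathcal{R}(A^{k})$; you still need to pin down $B_{1}$ on $\mathcal{R}(A^{k})^{\perp}$. The clean way to finish is to note that the orthogonality $\hat{A}_{1}^{*}\hat{A}_{2}=0$ (equivalently your $\hat{A}_{2}^{*}\hat{A}_{1}=0$) says $\mathcal{R}(\hat{A}_{2})\perp\mathcal{R}(\hat{A}_{1})=\mathcal{R}(A^{k})$, so applying the orthogonal projector $P_{A^{k}}$ to $A=\hat{A}_{1}+\hat{A}_{2}$ yields the intrinsic formula $\hat{A}_{1}=P_{A^{k}}A$. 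Any second decomposition satisfying the same structural hypotheses gives the same formula, and uniqueness follows immediately. This is essentially the argument in Wang's original paper, just packaged via the projector identity rather than block coordinates.
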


\begin{lemma}{\rm\cite{WLJ, DD}}
Let $A\in\mathbb{C}^{n\times n}_{k}$ be of the form $(\ref{eq1.1})$ and $m\in\mathbb{Z^{+}}$. Let $\triangle=(TT^{*}+S(I_{n-t}-N^{\dag}N)S^{*})^{-1}
$ and $T_{m}=\sum\limits_{j=0}^{m-1}T^{j}SN^{m-1-j}$. Then
\begin{equation}\label{e2.8}
A^{\dag}=U \left[\begin{array}{cc}
T^{*}\triangle & -T^{*}\triangle SN^{\dag} \\
(I_{n-t}-N^{\dag}N)S^{*}\triangle & N^{\dag}-(I_{n-t}-N^{\dag}N)S^{*}\triangle SN^{\dag} \\
\end{array}
\right] U^{*},
\end{equation}
\begin{equation}
P_{A}=U \left[\begin{array}{cc}\label{e2.9}
I_{t} & 0 \\
0 & NN^{\dag} \\
\end{array}
\right] U^{*},
\end{equation}

\begin{equation}\label{e2.11}
A^{k}=U\left[\begin{array}{cc}
T^{k} & T_{k}\\
0 & 0 \\
\end{array}
\right]
U^{*},
\end{equation}
\begin{equation}\label{e2.12}
A^{m}=U\left[\begin{array}{cc}
T^{m} & T_{m}\\
0 & N^{m} \\
\end{array}
\right]
U^{*},
\end{equation}

\begin{equation}\label{e.mwg}
A^{\textcircled{w}_{m}}=U \left[\begin{array}{cc}
T^{-1} & (T^{m+1})^{-1}T_{m} \\
0 & 0 \\
\end{array}
\right] U^{*}.\
\end{equation}
\end{lemma}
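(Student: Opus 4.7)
The plan is to verify the five formulas in the order they are listed, using only the block decomposition $(\ref{eq1.1})$, the unitarity of $U$, the invertibility of $T$, and the nilpotency $N^{k}=0$.

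First I would establish the power formula $(\ref{e2.12})$ by induction on $m$. The base case $m=1$ is the hypothesis; the inductive step is a routine block multiplication which, in the $(1,2)$ position, contributes $T\cdot T_{m-1}+SN^{m-1}$ and thereby accumulates to $T_{m}=\sum_{j=0}^{m-1}T^{j}SN^{m-1-j}$. The formula $(\ref{e2.11})$ for $A^{k}$ is then immediate, since $N^{k}=0$ annihilates the $(2,2)$ block.

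The hardest step will be verifying $(\ref{e2.8})$, the closed-form expression for $A^{\dag}$. Denote the candidate matrix by $X$ and check the four Moore--Penrose conditions $AXA=A$, $XAX=X$, $(AX)^{*}=AX$ and $(XA)^{*}=XA$ by direct block multiplication. The key algebraic identity is
\begin{equation*}
TT^{*}\triangle+S(I_{n-t}-N^{\dag}N)S^{*}\triangle=I_{t},
\end{equation*}
which is immediate from the definition of $\triangle$; I also use the fact that $I_{n-t}-N^{\dag}N$ and $NN^{\dag}$ are Hermitian idempotents (so that $\triangle$ is itself Hermitian), together with the orthogonality relations $N(I_{n-t}-N^{\dag}N)=0$ and $NN^{\dag}N=N$. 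With these tools, the four Penrose conditions reduce to a short sequence of block cancellations.

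Once $A^{\dag}$ is in hand, $(\ref{e2.9})$ follows by computing $P_{A}=AA^{\dag}$ in blocks: the $(1,1)$ entry collapses to $I_{t}$ via the key identity, the off-diagonal entries vanish using $N(I_{n-t}-N^{\dag}N)=0$, and the $(2,2)$ entry reduces to $NN^{\dag}$. For the final formula $(\ref{e.mwg})$, I would first record the standard consequence of the core-EP decomposition that $A^{\textcircled{\emph{\dag}}}=U\left[\begin{array}{cc}T^{-1}&0\\0&0\end{array}\right]U^{*}$, so that $(A^{\textcircled{\emph{\dag}}})^{m+1}=U\left[\begin{array}{cc}T^{-(m+1)}&0\\0&0\end{array}\right]U^{*}$, and then multiply by $A^{m}$ using $(\ref{e2.12})$ together with the definition $(\ref{eq.4})$. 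The resulting product is exactly the stated block form, concluding the proof.
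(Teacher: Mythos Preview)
Your proposal is correct. The paper itself does not prove this lemma at all: it is stated as a citation of \cite{WLJ,DD} and used as a black box throughout. Your plan --- induction for $(\ref{e2.12})$ and $(\ref{e2.11})$, direct verification of the four Penrose conditions for $(\ref{e2.8})$ (with the key identity $\triangle^{-1}\triangle=I_t$ and the projector relations for $N^{\dag}N$, $NN^{\dag}$), then reading off $P_A=AA^{\dag}$ for $(\ref{e2.9})$, and finally combining the standard core-EP inverse block form with $(\ref{eq.4})$ and $(\ref{e2.12})$ for $(\ref{e.mwg})$ --- is exactly the straightforward route and matches what one finds in the cited sources. There is nothing in the present paper to compare against beyond the bare statement.
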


\section{The $m$-core-nilpotent decomposition and $m$-weak group MP inverse}

Matrix decompositions are powerful tools for studying linear systems, numerical and symbolic computations
of generalized inverses \cite{WWQ,WSP,MBM,CW}. Furthermore, some special matrices in matrix decompositions are closely related to the generalized inverses. For example, the core matrices $\tilde{{A}_{1}}=AA^{D}A$ and $\hat{A_{1}}=AA^{\textcircled{\emph{\dag}}}A$ are the core parts in the core-nilpotent decomposition \cite{MBM,MMAS} and core-EP decomposition \rm{\cite{W}} of $A\in\mathbb{C}^{n\times n}_{k}$, respectively. By $A_{1}=AA^{{\textcircled{w}}_{m}}A\in\mathbb{C}^{\rm CM}_{n}$, in the next theorem we introduce a new matrix decomposition where $A_{1}$
is the core part in this decomposition of $A$.
\begin{theorem}\label{thm3.1}
 Let $A\in\mathbb{C}^{n\times n}_{k}$ and $m\in\mathbb Z^{+}$. Then $A$ can be uniquely expressed as $A=A_{1}+A_{2}$, where
\begin{eqnarray} \nonumber
&& (a)\ A_{1}\in\mathbb{C}^{\rm CM}_{n};\\  \nonumber
&& (b)\ (A_{2})^{k}=0;\\  \nonumber
&& (c)\ A_{2}A_{1}=0;\\  \nonumber
&& (d)\ (A^{k})^{*}A^{m}A_{2}=0.  \nonumber
\end{eqnarray}
 The above $A=A_{1}+A_{2}$ is called the $m$-core-nilpotent decomposition of $A$, in which case $A_{1}$ and $A_{2}$ are called the core part and nilpotent part in the $m$-core-nilpotent decomposition of $A$, respectively.
\begin{proof}
Let $A$ be of the form $(\ref{eq1.1})$. Denote $A_{1}=AA^{{\textcircled{w}}_{m}}A$ and $A_{2}=A-A_{1}.$ By $(\ref{e.mwg})$, we get
$$A_{1}= U\left[\begin{array}{cc}
T & T^{-m}T_{m+1}\\
0 & 0 \\
\end{array}
\right]U^{*}\ \ {\rm and} \ \ A_{2}= U\left[\begin{array}{cc}
0 & -T^{-m}T_{m}N\\
0 & N \\
\end{array}
\right]U^{*}.
 $$
Then it can easily be verified that $A_{1}$ and $A_{2}$ satisfy the conditions $(a)-(d)$.

For uniqueness, let $A=B_{1}+B_{2}$ be another $m$-core-nilpotent decomposition of $A$. Since $B_{2}B_{1}=0$, by \cite[Theorem 2]{MPKSV}, we obtain $\mathcal{R}(B_{1})\subseteq \mathcal{R}(A^{D})=\mathcal{R}(A^{k})$. By $(A^{k})^{*}A^{m}B_{2}=0$, $\mathcal{R}(B_{1})\subseteq \mathcal{R}(A^{k})$  and Lemma $\ref{l2.4}$ $(b)$, it follows that
 $$B_{1}-A_{1}=B_{1}-P_{\mathcal{R}(A^{k}),\mathcal{N}((A^{k})^{*}A^{m})}A=B_{1}-P_{\mathcal{R}(A^{k}),\mathcal{N}((A^{k})^{*}A^{m})}(B_{1}+B_{2})=B_{1}-B_{1}=0.$$
\end{proof}
\end{theorem}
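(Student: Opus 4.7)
The text preceding Theorem \ref{thm3.1} already suggests the natural candidate for the core part: take $A_1 := A A^{\textcircled{w}_{m}} A$, which lies in $\mathbb{C}^{\rm CM}_{n}$ by design, and let $A_2 := A - A_1$. My plan is then to verify (a)--(d) by a direct block computation in the core-EP decomposition, and to establish uniqueness through the oblique projector characterization of $A A^{\textcircled{w}_{m}}$ given in Lemma \ref{l2.4}(b).

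For existence, I would write $A$ in its core-EP form from Lemma \ref{l2.33} and substitute the block expression (\ref{e.mwg}) for $A^{\textcircled{w}_{m}}$. This displays $A_1$ as a block upper triangular matrix with invertible top-left block $T$ and vanishing bottom row, and $A_2$ as a matrix whose only nontrivial lower block is the nilpotent $N$ sitting in the bottom-right. From these explicit forms, the four conditions reduce to routine block calculations: (a) $r(A_1^{2}) = r(T^{2}) = r(T) = r(A_1)$, so $A_1$ is core; (b) $(A_2)^{k}$ isolates $N^{k} = 0$; (c) the zero bottom row of $A_1$ together with the zero top-left of $A_2$ forces $A_2 A_1 = 0$; and (d) one multiplies the explicit expressions (\ref{e2.11}) for $A^{k}$ and (\ref{e2.12}) for $A^{m}$ against $A_2$ and collects blocks.

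For uniqueness, suppose $A = B_1 + B_2$ is another decomposition satisfying (a)--(d). By Lemma \ref{l2.4}(b), $A A^{\textcircled{w}_{m}} = P_{\mathcal{R}(A^{k}),\,\mathcal{N}((A^{k})^{*} A^{m})}$, so proving $B_1 = A_1$ is equivalent to showing that this projector fixes $B_1$ and annihilates $B_2$. Condition (d) places every column of $B_2$ in $\mathcal{N}((A^{k})^{*} A^{m})$, so the projector kills $B_2$ immediately. For the fixed-point property I need $\mathcal{R}(B_1) \subseteq \mathcal{R}(A^{k})$, which I would extract from (c) by a one-line induction: $A B_1 = B_1^{2} + B_2 B_1 = B_1^{2}$, hence $A^{j} B_1 = B_1^{j+1}$ for every $j \geq 1$; since (a) makes $B_1$ a core matrix, $\mathcal{R}(B_1) = \mathcal{R}(B_1^{k+1}) = \mathcal{R}(A^{k} B_1) \subseteq \mathcal{R}(A^{k})$. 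Applying the projector to $A = B_1 + B_2$ then forces $B_1 = A_1$ and hence $B_2 = A_2$. The main obstacle is precisely this uniqueness step: unlike the classical core-nilpotent decomposition (Lemma \ref{l2.2222}), which enjoys the two-sided commutations $\tilde{A}_{1} \tilde{A}_{2} = \tilde{A}_{2} \tilde{A}_{1} = 0$, here the analogue of $A_1 A_2 = 0$ is weakened to the one-sided orthogonality-type condition (d), so one cannot simply invoke the classical uniqueness. The plan survives this weakening exactly because the oblique projector $A A^{\textcircled{w}_{m}}$ is engineered to have range and kernel perfectly matched to conditions (c) and (d).
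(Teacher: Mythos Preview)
Your proposal is correct and follows essentially the same route as the paper: construct $A_1 = A A^{\textcircled{w}_{m}} A$ via the core-EP block form, and for uniqueness apply the projector $A A^{\textcircled{w}_{m}} = P_{\mathcal{R}(A^{k}),\,\mathcal{N}((A^{k})^{*}A^{m})}$ to any competing decomposition $B_1 + B_2$. The one noteworthy difference is that where the paper invokes an external result \cite[Theorem~2]{MPKSV} to obtain $\mathcal{R}(B_1) \subseteq \mathcal{R}(A^{k})$, you supply a self-contained argument via the induction $A^{j} B_1 = B_1^{\,j+1}$ (from $A B_1 = B_1^{2} + B_2 B_1 = B_1^{2}$) together with $B_1$ being core; this is a clean and arguably preferable way to close that gap without leaving the paper.
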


Furthermore, according to the above proof, we get directly another characterization of $m$-core-nilpotent decomposition of $A\in\mathbb{C}^{n\times n}_{k}$ by using $A_{1}=AA^{{\textcircled{w}}_{m}}A$.
\begin{theorem}\label{def3.1}
Let $A\in\mathbb{C}^{n\times n}_{k}$ and $m\in\mathbb Z^{+}$ and let $A=A_{1}+A_{2}$ be the $m$-core-nilpotent decomposition of $A$. Then
$$A_{1}=AA^{{\textcircled{w}}_{m}}A \mbox{ and}\ A_{2}=A-AA^{{\textcircled{w}}_{m}}A.$$
If $A$ is given by $(\ref{eq1.1})$, then
\begin{equation}\label{eq1.212}
A_{1}=U \left[\begin{array}{cc}
T & T^{-m}T_{m+1}\\
0 & 0 \\
\end{array}
\right] U^{*},
\end{equation}and
$$A_{2}= U\left[\begin{array}{cc}
0 & -T^{-m}T_{m}N\\
0 & N \\
\end{array}
\right]U^{*}.$$
\end{theorem}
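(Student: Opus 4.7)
The strategy is to read off both claims from what has already been established in the proof of Theorem \ref{thm3.1}. There, an $m$-core-nilpotent decomposition was constructed explicitly by taking $A_1 := AA^{\textcircled{w}_m}A$ and $A_2 := A - A_1$, and conditions (a)--(d) were verified. The uniqueness clause of Theorem \ref{thm3.1} therefore forces any pair $(A_1,A_2)$ satisfying (a)--(d) to coincide with this specific pair. This immediately delivers the first displayed identity $A_1 = AA^{\textcircled{w}_m}A$ and $A_2 = A - AA^{\textcircled{w}_m}A$, with no further work required.

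For the explicit block formulas, the plan is to substitute the core-EP form (\ref{eq1.1}) of $A$ and the expression (\ref{e.mwg}) for $A^{\textcircled{w}_m}$ into $A_1 = AA^{\textcircled{w}_m}A$, and carry out two $2\times 2$ block products. Since $T$ is invertible, a short computation first gives $AA^{\textcircled{w}_m} = U\operatorname{diag}(I_t,0)$-style block with $(1,2)$-entry $T^{-m}T_m$, and then right-multiplying by $A$ yields
\[
A_1 = U\begin{pmatrix} T & S + T^{-m}T_m N \\ 0 & 0 \end{pmatrix}U^*.
\]
To rewrite the $(1,2)$-block in the form $T^{-m}T_{m+1}$ claimed in the theorem, I would invoke the one-step recurrence
\[
T_{m+1} = T^m S + T_m N,
\]
which is immediate from the defining sum $T_m = \sum_{j=0}^{m-1}T^j S N^{m-1-j}$ by splitting the $j=m$ term of $T_{m+1}$ off the top. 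Multiplying through by $T^{-m}$ gives $S + T^{-m}T_m N = T^{-m}T_{m+1}$, converting the block into the asserted form for $A_1$. Subtracting $A_1$ from $A$, the remaining $(1,2)$-entry is $S - T^{-m}T_{m+1} = -T^{-m}T_m N$ by the same recurrence, and this produces the stated block form of $A_2$.

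No substantial obstacle appears: the argument is a combination of the uniqueness statement of Theorem \ref{thm3.1}, routine block multiplication in the core-EP coordinates, and the one-line telescoping identity for $T_m$. The only minor care is to keep the exponents $m$, $m-1$, $m+1$ straight when applying the recurrence; once that bookkeeping is in order, both block formulas fall out directly.
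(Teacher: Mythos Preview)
Your proposal is correct and follows essentially the same route as the paper: the paper itself simply states that Theorem~\ref{def3.1} follows ``according to the above proof'' of Theorem~\ref{thm3.1}, where $A_1=AA^{\textcircled{w}_m}A$ was constructed, its block form was written down, and uniqueness was established. You have merely made explicit the two block multiplications and the recurrence $T_{m+1}=T^{m}S+T_{m}N$ that the paper leaves to the reader.
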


\begin{remark}\label{rem3.33}
 If $m\geq k-1$, then $A_{1}$ coincides with $\tilde{{A}_{1}}=AA^{D}A$ which is the core part in the core-nilpotent decomposition. According to the uniqueness of the core-nilpotent decomposition, it is clear that the $m$-core-nilpotent decomposition coincides with the core-nilpotent decomposition when $m\geq k-1$. Therefore, this new decomposition is a generalization of the core-nilpotent decomposition. If $m=1$, then $A_{1}$ coincides with $C=AA^{{\textcircled{w}}}A$ \cite{DFAN}.
\end{remark}

Note that the above $C$ can be used to define the weak core inverse of $A$ in $(\ref{eq1})$, we introduce similarly a new generalized inverse by using $A_{1}$, $A^{D}$, $A^{\dag}$ and matrix equations.
\begin{theorem}\label{thm3.4}
Let $A\in\mathbb{C}^{n\times n}_{k}$ and $X\in\mathbb{C}^{n\times n}$. Let $A_{1}$ be as in Theorem $\ref{def3.1}$. Then
\begin{equation}\label{eq2.1}
XAX=X, \ \ \  AX=A_{1}A^{\dag} \ \ and\ \ XA=A^{D}A_{1},
\end{equation} is consistent and the unique solution is
\begin{equation}\label{th3.1d}
X=A^{D}A_{1}A^{\dag}=A^{D}AA^{{\textcircled{w}}_{m}}AA^{\dag}.
\end{equation}

\begin{proof}
Let $X=A^{D}A_{1}A^{\dag}$. A simple calculation leads to $XA=A^{D}AA^{{\textcircled{w}}_{m}}AA^{\dag}A=A^{D}A_{1}.$ By Lemma $\ref{l2.3}$ $(b)$, we get that $AX=AA^{D}AA^{{\textcircled{w}}_{m}}AA^{\dag}=A_{1}A^{\dag}$.
By Lemma $\ref{l2.3}$ $(b)$ and Lemma $\ref{l2.4}$ $(b)$, it follows that $$XAX=A^{D}AA^{{\textcircled{w}}_{m}}AA^{\dag}AA^{D}AA^{{\textcircled{w}}_{m}}AA^{\dag}=A^{D}AA^{{\textcircled{w}}_{m}}AA^{D}AA^{{\textcircled{w}}_{m}}AA^{\dag}=A^{D}A_{1}A^{\dag}.$$

For the uniqueness, we assume that $X_{1}$ and $X_{2}$ satisfy the system $(\ref{eq2.1})$. Then we obtain $AX_{1}=A_{1}A^{\dag}=AX_{2}$ and $X_{1}A=A^{D}A_{1}=X_{2}A$, which implies that $X_{2}=X_{2}AX_{2}=X_{2}AX_{1}=X_{2}AX_{1}=X_{1}AX_{1}=X_{1}.$
\end{proof}
\end{theorem}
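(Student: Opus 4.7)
The plan is to verify existence by substituting the claimed $X = A^{D}A_{1}A^{\dag} = A^{D}AA^{\textcircled{w}_{m}}AA^{\dag}$ into each of the three equations in $(\ref{eq2.1})$, and then to dispatch uniqueness by a short cancellation argument. Throughout, I would rely on two absorption principles: the Moore-Penrose identity $AA^{\dag}A = A$, and the fact from Lemmas \ref{l2.3}(b) and \ref{l2.4} that $AA^{D} = A^{D}A$ is the projector onto $\mathcal{R}(A^{k})$ along $\mathcal{N}(A^{k})$, so it acts as the identity on any matrix whose column space is contained in $\mathcal{R}(A^{k})$.

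For $XA = A^{D}A_{1}$, I would multiply out and absorb the middle occurrence of $AA^{\dag}A = A$ to obtain $XA = A^{D}AA^{\textcircled{w}_{m}}(AA^{\dag}A) = A^{D}AA^{\textcircled{w}_{m}}A = A^{D}A_{1}$. For $AX = A_{1}A^{\dag}$, I would compute $AX = (AA^{D})AA^{\textcircled{w}_{m}}AA^{\dag}$; since $\mathcal{R}(AA^{\textcircled{w}_{m}}) = \mathcal{R}(A^{k})$ by Lemma \ref{l2.4}(b), the leading factor $AA^{D}$ is the identity on $\mathcal{R}(AA^{\textcircled{w}_{m}}A)$, and the product collapses to $AA^{\textcircled{w}_{m}}AA^{\dag} = A_{1}A^{\dag}$.

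The most delicate verification is $XAX = X$. I would rewrite $XAX = (XA)X = A^{D}A_{1}\cdot A^{D}A_{1}A^{\dag}$ and reduce matters to the identity $A_{1}A^{D}A_{1} = A_{1}$. Grouping as $AA^{\textcircled{w}_{m}}(AA^{D})AA^{\textcircled{w}_{m}}A$ and applying Lemma \ref{l2.4}(c) together with $\mathcal{R}(A^{D}) = \mathcal{R}(A^{k})$ yields $A^{\textcircled{w}_{m}}AA^{D} = A^{D}$, after which a further use of the absorption $AA^{D}\cdot AA^{\textcircled{w}_{m}}A = AA^{\textcircled{w}_{m}}A$ collapses the product to $A_{1}$. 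This chain of absorptions is the step I expect to be the main obstacle, chiefly because one must keep careful track of which projector absorbs into which factor and in what order.

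Uniqueness is then routine. If $X_{1}$ and $X_{2}$ both satisfy $(\ref{eq2.1})$, then $AX_{1} = AX_{2} = A_{1}A^{\dag}$ and $X_{1}A = X_{2}A = A^{D}A_{1}$, so
\[ X_{1} = X_{1}AX_{1} = (X_{2}A)X_{1} = X_{2}(AX_{1}) = X_{2}(AX_{2}) = X_{2}AX_{2} = X_{2}, \]
which pins down $X$ uniquely and closes the argument.
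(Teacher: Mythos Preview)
Your proposal is correct and follows essentially the same route as the paper's proof: verify the three equations by projector-absorption using Lemmas~\ref{l2.3}(b) and~\ref{l2.4}, then conclude uniqueness by the standard sandwich argument. The only cosmetic difference is in the $XAX=X$ step, where the paper simplifies via $(AA^{\textcircled{w}_m})(AA^{D})=AA^{D}$ using Lemma~\ref{l2.4}(b), whereas you pass through the identity $A_{1}A^{D}A_{1}=A_{1}$ via $A^{\textcircled{w}_m}A\,A^{D}=A^{D}$ from Lemma~\ref{l2.4}(c); both amount to the same absorption principle.
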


\begin{definition}\label{def3.5}
Let $A\in\mathbb{C}^{n\times n}_{k}$ and $m\in\mathbb Z^{+}$. The $m$-weak group MP inverse (in short, $m$-WGMP inverse) of $A$, denoted as $A^{{\textcircled{w}}_{m}, \dag}$, is defined to be the unique solution to $(\ref{eq2.1})$.
\end{definition}

 Next, we provide another expression of the $m$-WGMP inverse and its canonical form by using the core-EP decomposition.

\begin{theorem}\label{th3.7} Let $A\in\mathbb{C}^{n\times n}_{k}$ and $m\in\mathbb Z^{+}$. Then
\begin{equation}\label{th3,7}
A^{{\textcircled{w}}_{m}, \dag}=A^{{\textcircled{w}}_{m}}P_{A}.
\end{equation}
If $A$ is given by $(\ref{eq1.1})$, then \begin{equation}\label{311}
A^{{\textcircled{w}}_{m}, \dag}=U \left[\begin{array}{cc}
T^{-1} & T^{-(m+1)}T_{m}NN^{\dag} \\
0 & 0 \\
\end{array}
\right] U^{*}.
\end{equation}
\begin{proof}
According to the Theorem $\ref{def3.1}$, Lemma $\ref{l2.3}$ $(b)$ and Lemma $\ref{l2.4}$ $(a)$, it follows that
\begin{center}
$A^{{\textcircled{w}}_{m}, \dag}=A^{D}A_{1}A^{\dag}=A^{D}AA^{{\textcircled{w}}_{m}}AA^{\dag}=A^{{\textcircled{w}}_{m}}AA^{\dag}=A^{{\textcircled{w}}_{m}}P_{A}.$
\end{center}
 By using $(\ref{e2.9})$, $(\ref{e.mwg})$ and $(\ref{th3,7})$, $A^{{\textcircled{w}}_{m}, \dag}$ in $(\ref{311})$ can be directly verified.
\end{proof}
\end{theorem}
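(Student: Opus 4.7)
My plan is to deduce both halves of the statement from the explicit formula $A^{{\textcircled{w}}_{m},\dag}=A^{D}A_{1}A^{\dag}$ already supplied by Theorem \ref{thm3.4}, combined with the block expressions \eqref{e2.9} and \eqref{e.mwg}. The whole argument should be short: essentially a range-absorption step followed by a $2\times 2$ block multiplication.

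First I would expand $A_{1}=AA^{{\textcircled{w}}_{m}}A$ and use $AA^{\dag}=P_{A}$ to rewrite
$$A^{{\textcircled{w}}_{m},\dag}=A^{D}A\cdot A^{{\textcircled{w}}_{m}}\cdot AA^{\dag}=A^{D}A\cdot A^{{\textcircled{w}}_{m}}\cdot P_{A}.$$
The crux is to absorb $A^{D}A$ on the left. By Lemma \ref{l2.3}(b), $A^{D}A$ is the oblique projector $P_{\mathcal{R}(A^{k}),\mathcal{N}(A^{k})}$, while Lemma \ref{l2.4}(a) yields $\mathcal{R}(A^{{\textcircled{w}}_{m}})=\mathcal{R}(A^{k})$. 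Since a projector fixes every vector in its range, this forces $A^{D}A\cdot A^{{\textcircled{w}}_{m}}=A^{{\textcircled{w}}_{m}}$, and therefore $A^{{\textcircled{w}}_{m},\dag}=A^{{\textcircled{w}}_{m}}P_{A}$, which is \eqref{th3,7}.

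For the canonical form \eqref{311}, I would substitute \eqref{e.mwg} and \eqref{e2.9} into the identity just established, cancel the middle $U^{*}U=I_{n}$, and carry out one block multiplication. Because the second block-row of $A^{{\textcircled{w}}_{m}}$ vanishes, only the block-columns of $P_{A}$ contribute to the first row: the $(1,1)$ entry $T^{-1}$ is preserved and the $(1,2)$ entry $(T^{m+1})^{-1}T_{m}$ picks up a factor $NN^{\dag}$ on the right, producing exactly the stated matrix.

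I do not foresee any genuine obstacle; the whole argument hinges on the single identity $\mathcal{R}(A^{{\textcircled{w}}_{m}})=\mathcal{R}(A^{k})$, which is already packaged inside Lemma \ref{l2.4}(a). The only point that deserves a moment's thought is that the direction $\mathcal{N}(A^{k})$ of the oblique projector $A^{D}A$ is immaterial here, since all that matters is that it acts as the identity on $\mathcal{R}(A^{k})$; once this is noted, everything else reduces to routine verification.
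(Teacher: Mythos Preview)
Your proposal is correct and follows essentially the same route as the paper: both start from the formula $A^{{\textcircled{w}}_{m},\dag}=A^{D}A_{1}A^{\dag}$ of Theorem~\ref{thm3.4}, expand $A_{1}=AA^{{\textcircled{w}}_{m}}A$, and then absorb $A^{D}A$ into $A^{{\textcircled{w}}_{m}}$ via Lemma~\ref{l2.3}(b) and Lemma~\ref{l2.4}(a), before reading off the block form from \eqref{e2.9} and \eqref{e.mwg}. Your write-up merely spells out the projector-absorption step in slightly more detail than the paper does.
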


\begin{remark}\label{rem3.7}
From $(\ref{eq.2})$, $(\ref{eq.6})$, $(\ref{th3,7})$ and Remark $\ref{rem1.1}$, it is easy to derive that

 $(a)$ If $m=1$, then $A^{{\textcircled{w}}_{1}, \dag}=A^{{\textcircled{w}}, \dag}$;

 $(b)$ If $m\geq k-1$, then $A^{{\textcircled{w}}_{m}, \dag}=A^{D, \dag}$;

 $(c)$ If $A\in\mathbb{C}^{\rm EP}_{n}$, then $A^{{\textcircled{w}}_{m}, \dag}=A^{{\textcircled{w}}_{m}}$.\\
Hence, the $m$-WGMP inverse actually unifies the weak core inverse and DMP-inverse.
\end{remark}

The following example verifies that the $m$-WGMP inverse is indeed a new generalized inverse.
\begin{example}
Let\\
$$A=\left[\begin{array}{cc}
I_{4} & I_{4}  \\
0 & N\\
\end{array}
\right], where\  N=\left[\begin{array}{cccc}
0 & 1 & 0  & 1 \\
0 & 0 & 1  & 1 \\
0 & 0 & 0  & 1 \\
0 & 0 & 0  & 0\\
\end{array}
\right].$$ \\
It can be checked that ${\rm Ind}(A)=4$. By computations, we get that:

$$A^{\dag}=\left[\begin{array}{cc}
H_{1} & -N^{\dag} \\
I-H_{1} & N^{\dag} \\
\end{array}
\right], \ \ \
A^{\textcircled{\dag}}=\left[\begin{array}{cc}
I_{4} & 0 \\
0 & 0\\
\end{array}
\right],  \ \ A^{D}=\left[\begin{array}{cc}
 I_{4} & H_{2} \\
0 & 0 \\
\end{array}
\right],$$

$$ A^{D,\dag}=\left[\begin{array}{cc}
I_{4} & H_{3} \\
0 & 0\\
\end{array}
\right], \;\ A^{\textcircled{w}}=\left[\begin{array}{cc}
I_{4} & I_{4} \\
0 & 0\\
\end{array}
\right], \ \ \ A^{\textcircled{w},\dag}=\left[\begin{array}{cc}
 I_{4}  &   H_{4} \\
 0  &  0 \\
\end{array}
\right],$$

$$ A^{\textcircled{w}_{2}}=\left[\begin{array}{cc}
I_{4} & H_{5} \\
0 & 0\\
\end{array}
\right], \; \ \ \ A^{\textcircled{w}_{2},\dag}=\left[\begin{array}{cc}
 I_{4}  &   H_{6} \\
 0  &  0 \\
\end{array}
\right] and \ A^{\textcircled{\#}_{2}}=\left[\begin{array}{cc}
 I_{4}  &   H_{7} \\
 0  &  0 \\
\end{array}
\right],$$
where $$H_{1}=\left[\begin{array}{cccc}
\frac{1}{2} & 0& 0& 0  \\
0 & 1& 0 & 0 \\
0 & 0& 1& 0  \\
0 & 0& 0 & 1 \\
\end{array}
\right], \;\ H_{2}=\left[\begin{array}{cccc}
1 & 1& 1&3 \\
0 & 1& 1& 2\\
0 & 0& 1 &1\\
0 & 0& 0 &1\\
\end{array}
\right], \;\ H_{3}=\left[\begin{array}{cccc}
1 & 1& 1& 0 \\
0 & 1& 1& 0 \\
0 & 0& 1& 0 \\
0 & 0& 0& 0 \\
\end{array}
\right], \;\ H_{4}=\left[\begin{array}{cccc}
1 & 0& 0 & 0\\
0 & 1& 0& 0 \\
0 & 0& 1& 0 \\
0 & 0& 0& 0 \\
\end{array}
\right], $$

$$\\ H_{5}=\left[\begin{array}{cccc}
1 & 1& 0 & 1\\
0 & 1& 1& 1 \\
0 & 0& 1& 1 \\
0 & 0& 0& 1 \\
\end{array}
\right], \;\ H_{6}=\left[\begin{array}{cccc}
1 & 1 &  0 & 0 \\
0 & 1& 1& 0 \\
0 & 0& 1& 0 \\
0 & 0& 0& 0 \\
\end{array}
\right],  \;\ H_{7}=\left[\begin{array}{cccc}
1 & 1 &  0 & 0 \\
0 & 1& 0& 0 \\
0 & 0& 0& 0 \\
0 & 0& 0& 0 \\
\end{array}
\right] and \ N^{\dag}=\left[\begin{array}{cccc}
0 & 0& 0& 0 \\
1 & 0& -1&0 \\
0 &1& -1 & 0\\
0 & 0& 1& 0\\
\end{array}
\right].$$

\end{example}

Some basic properties of the $m$-WGMP inverse including the rank, range space, null space and projectors are given, which play an important role in the characterizations, expressions and applications of the $m$-WGMP inverse in the rest of the sections.

\begin{theorem}\label{th3.10}
Let $A\in\mathbb{C}^{n\times n}_{k}$ and $m\in\mathbb Z^{+}$. Then the following assertions hold:
\begin{itemize}
\item[(a)] $r(A^{{\textcircled{w}}_{m},\dag})=r(A^{k});$
\item[(b)] $\mathcal{R}(A^{{\textcircled{w}}_{m}, \dag})=\mathcal{R}(A^{k})$, $\mathcal{N}(A^{{\textcircled{w}}_{m}, \dag})=\mathcal{N}((A^{k})^{*}A^{m+1}A^{\dag});$
\item[(c)] $A^{\textcircled{w}_{m},\dag}=A^{(2)}_{\mathcal{R}(A^{k}),\mathcal{N}((A^{k})^{*}A^{m+1}A^{\dag})};$
\item[(d)] $AA^{{\textcircled{w}}_{m},\dag}=P_{\mathcal{R}(A^{k}),\mathcal{N}((A^{k})^{*}A^{m+1}A^{\dag})};$
\item[(e)] $A^{{\textcircled{w}}_{m},\dag}A=P_{\mathcal{R}(A^{k}),\mathcal{N}((A^{k})^{*}A^{m+1})}$.
\end{itemize}
\begin{proof}
$(a)$. It is evident from $(\ref{e2.11})$ and $(\ref{311})$.

$(b)$. Since $\mathcal{R}(A^{{\textcircled{w}}_{m},\dag})=\mathcal{R}(A^{{\textcircled{w}}_{m}}AA^{\dag})\subseteq \mathcal{R}(A^{{\textcircled{w}}_{m}}A)=\mathcal{R}(A^{k})$ and since $r(A^{\textcircled{w}_{m},\dag})=r(A^{k})$, we get $\mathcal{R}(A^{{\textcircled{w}}_{m},\dag})=\mathcal{R}(A^{k}).$
From Lemma $\ref{l2.4}$ $(a)$, we get $\mathcal{N}(A^{{\textcircled{w}}_{m}})=\mathcal{N}((A^{k})^{*}A^{m}).$
 If $x\in \mathcal{N}(A^{{\textcircled{w}}_{m},\dag})=\mathcal{N}(A^{{\textcircled{w}}_{m}}AA^{\dag})$, we get that $AA^{\dag}x \in \mathcal{N}((A^{k})^{*}A^{m})$, and later $\mathcal{N}(A^{\textcircled{w}_{m},\dag})\subseteq\mathcal{N}((A^{k})^{*}A^{m+1}A^{\dag}).$
By $(\ref{e2.9})$, $(\ref{e2.11})$, $(\ref{e2.12})$ and $(\ref{311})$, it can be verified that $r((A^{k})^{*}A^{m+1}A^{\dag})=t =r(A^{\textcircled{w}_{m},\dag})$, which implies that $\mathcal{N}(A^{\textcircled{w}_{m},\dag})=\mathcal{N}((A^{k})^{*}A^{m+1}A^{\dag}).$

$(c)$. This directly follows by Theorem $\ref{thm3.4}$ and the assertion $(b)$ .

$(d)$. By the assertion $(c)$ , it is clear that $AA^{{\textcircled{w}}_{m}, \dag}$ and $A^{{\textcircled{w}}_{m},\dag}A$ are idempotent matrices .
 Since $\mathcal{R}(AA^{{\textcircled{w}}_{m}, \dag})=A\mathcal{R}(A^{{\textcircled{w}}_{m}, \dag})=A\mathcal{R}(A^{k})=\mathcal{R}(A^{k+1})=\mathcal{R}(A^{k})$ and since
 $\mathcal{N}(AA^{{\textcircled{w}}_{m}, \dag})=\mathcal{N}(A^{{\textcircled{w}}_{m}, \dag})=\mathcal{N}((A^{k})^{*}A^{m+1}A^{\dag})$ from Theorem $\ref{th3.10}$ $(b)$, we get $AA^{{\textcircled{w}}_{m},\dag}=P_{\mathcal{R}(A^{k}),\mathcal{N}((A^{k})^{*}A^{m+1}A^{\dag})}.$

$(e)$. The proof follows similarly as for the part $(d)$.
\end{proof}
\end{theorem}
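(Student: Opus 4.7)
The plan is to mimic the proof of part (d), verifying idempotence, then computing the range and the null space of $A^{\textcircled{w}_{m},\dag}A$ separately and invoking the standard fact that an idempotent is the projector onto its range along its null space.

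First, from part (c), $A^{\textcircled{w}_{m},\dag}$ satisfies $A^{\textcircled{w}_{m},\dag}AA^{\textcircled{w}_{m},\dag}=A^{\textcircled{w}_{m},\dag}$ (this is precisely the first equation of $(\ref{eq2.1})$), so multiplying on the left by $A$ shows $(A^{\textcircled{w}_{m},\dag}A)^{2}=A^{\textcircled{w}_{m},\dag}A$; hence $A^{\textcircled{w}_{m},\dag}A$ is idempotent. For the range, I would argue $\mathcal{R}(A^{\textcircled{w}_{m},\dag}A)\subseteq \mathcal{R}(A^{\textcircled{w}_{m},\dag})=\mathcal{R}(A^{k})$ by part (b); the reverse inclusion follows from a rank count, because $A^{\textcircled{w}_{m},\dag}$ is a $\{2\}$-inverse of $A$, so $r(A^{\textcircled{w}_{m},\dag}A)=r(A^{\textcircled{w}_{m},\dag})=r(A^{k})$ by part (a), forcing equality of the two subspaces.

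The main content is the null space identification. I would use $A^{\textcircled{w}_{m},\dag}=A^{\textcircled{w}_{m}}P_{A}$ from Theorem $\ref{th3.7}$ and the identity $P_{A}A=AA^{\dag}A=A$ to collapse $A^{\textcircled{w}_{m},\dag}A=A^{\textcircled{w}_{m}}A$. Then Lemma $\ref{l2.4}$ (c) directly gives $\mathcal{N}(A^{\textcircled{w}_{m},\dag}A)=\mathcal{N}(A^{\textcircled{w}_{m}}A)=\mathcal{N}((A^{k})^{*}A^{m+1})$. Alternatively, one can mirror the computation in (d) more closely by writing $x\in\mathcal{N}(A^{\textcircled{w}_{m},\dag}A)$ iff $Ax\in\mathcal{N}(A^{\textcircled{w}_{m},\dag})=\mathcal{N}((A^{k})^{*}A^{m+1}A^{\dag})$, and then noting that $(A^{k})^{*}A^{m+1}A^{\dag}A=(A^{k})^{*}A^{m}\cdot AA^{\dag}A=(A^{k})^{*}A^{m+1}$, which yields the same conclusion.

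Combining idempotence with $\mathcal{R}(A^{\textcircled{w}_{m},\dag}A)=\mathcal{R}(A^{k})$ and $\mathcal{N}(A^{\textcircled{w}_{m},\dag}A)=\mathcal{N}((A^{k})^{*}A^{m+1})$ yields the desired projector representation. I do not anticipate a real obstacle here: the shortcut $A^{\textcircled{w}_{m},\dag}A=A^{\textcircled{w}_{m}}A$ reduces (e) essentially to a one-line consequence of Lemma $\ref{l2.4}$ (c); the only point requiring a small amount of care is checking that $(A^{k})^{*}A^{m+1}A^{\dag}A=(A^{k})^{*}A^{m+1}$ if one chooses the longer route that parallels (d).
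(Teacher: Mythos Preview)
Your proof is correct and follows the paper's intended route of mimicking part (d); the only slip is that to obtain idempotence of $A^{\textcircled{w}_{m},\dag}A$ from $XAX=X$ you should multiply on the \emph{right} by $A$, not the left (left-multiplication gives idempotence of $AA^{\textcircled{w}_{m},\dag}$ instead). Your shortcut $A^{\textcircled{w}_{m},\dag}A=A^{\textcircled{w}_{m}}P_{A}A=A^{\textcircled{w}_{m}}A$ together with Lemma~\ref{l2.4}(c) is in fact cleaner than the parallel-to-(d) argument, since it yields the full projector identity in one stroke without separate range and null-space computations; the paper's ``similarly'' presumably has the longer route in mind, so your observation is a worthwhile simplification.
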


\section{Characterizations of the $m$-WGMP inverse}

 In this section, we focus on the characterizations of the $m$-WGMP inverse by using the range space, null
space, rank equalities, matrix equations and projectors.

\begin{theorem}\label{th4.1}
Let $A\in\mathbb{C}^{n\times n}_{k}$ and $m\in\mathbb Z^{+}$. Let $A_{1}$ be as in Theorem $\ref{def3.1}$. Then the following conditions are equivalent:
\begin{itemize}
\item[(a)] $X=A^{\textcircled{w}_{m},\dag}$;
\item[(b)] $\mathcal{R}(X)=\mathcal{R}(A^{k})$ and $AX=A_{1}A^{\dag}$;
\item[(c)] $\mathcal{R}(X)=\mathcal{R}(A^{k})$ and $A^{*}AX=A^{*}A_{1}A^{\dag}$;
\item[(d)] $\mathcal{R}(X)=\mathcal{R}(A^{k})$ and $A^{k}X=A_{1}^{k}A^{\dag}$;
\item[(e)] $\mathcal{N}(X)=\mathcal{N}((A^{k})^{*}A^{m+1}A^{\dag})$ and $XA=A^{{\textcircled{w}}_{m}}A$;
\item[(f)] $\mathcal{N}(X)=\mathcal{N}((A^{k})^{*}A^{m+1}A^{\dag})$ and $XAA^{*}=A^{{\textcircled{w}}_{m}}AA^{*}$.
\end{itemize}
\begin{proof}

 $(a)\Rightarrow(b)$. It is a direct consequence of Theorem \ref{th3.7} and Theorem \ref{th3.10} $(b)$.

$(b)\Rightarrow(c)$.  Premultiplying $AX=A_{1}A^{\dag}$ by $A^{*}$, we have
$$A^{*}AX=A^{*}A_{1}A^{\dag}.$$

$(c)\Rightarrow(d)$. Premultiplying $A^{*}AX=A^{*}A_{1}A^{\dag}$ by $A^{k-1}$$(A^{\dag})^{*}$, we obtain $A^{k}X=A^{k}A^{{\textcircled{w}}_{m}}AA^{\dag}.$
   By Lemma $\ref{l2.4}$ $(b)$ and $(c)$, we get $A_{1}^{2}=AA^{{\textcircled{w}}_{m}}AAA^{{\textcircled{w}}_{m}}A=A^{2}A^{{\textcircled{w}}_{m}}A$, and later \begin{equation}\label{eq...}
   A_{1}^{k}=A^{k}A^{{\textcircled{w}}_{m}}A.
   \end{equation}
    Hence $A^{k}A^{{\textcircled{w}}_{m}}AA^{\dag}=A_{1}^{k}A^{\dag},$ which implies
$A^{k}X=A_{1}^{k}A^{\dag}.$

$(d)\Rightarrow(e)$.  Premultiplying $A^{k}X=A_{1}^{k}A^{\dag}$ by $(A^{{\textcircled{w}}_{m}})^{k}$, we have
$$(A^{{\textcircled{w}}_{m}})^{k}A^{k}X=(A^{{\textcircled{w}}_{m}})^{k}A_{1}^{k}A^{\dag}.$$
By $\mathcal{R}(X)=\mathcal{R}(A^{k})$ and Lemma $\ref{l2.4}$ $(a)$ and $(c)$, it follows that
$$(A^{{\textcircled{w}}_{m}})^{k}A^{k}X=(A^{{\textcircled{w}}_{m}})^{k-1}(A^{{\textcircled{w}}_{m}}A)A^{k-1}X=(A^{{\textcircled{w}}_{m}})^{k-1}A^{k-1}X=\cdots=A^{{\textcircled{w}}_{m}}AX=X$$
and
\begin{eqnarray*}
(A^{{\textcircled{w}}_{m}})^{k}A_{1}^{k}A^{\dag}&=&(A^{{\textcircled{w}}_{m}})^{k-1}(A^{{\textcircled{w}}_{m}}A)A^{k-1}A^{{\textcircled{w}}_{m}}AA^{\dag} =(A^{{\textcircled{w}}_{m}})^{k-1}A^{k-1}A^{{\textcircled{w}}_{m}}AA^{\dag}\\&=&\cdots=A^{{\textcircled{w}}_{m}}AA^{{\textcircled{w}}_{m}}AA^{\dag}=A^{\textcircled{w}_{m},\dag},
\end{eqnarray*}
which implies $X=A^{{\textcircled{w}}_{m},\dag}$. Then the rest of proof is trivial.

$(e)\Rightarrow(f)$. Postmultiplying $XA=A^{{\textcircled{w}}_{m}}A$ by $A^{*}$, we get that $$XAA^{*}=A^{{\textcircled{w}}_{m}}AA^{*}.$$

  $(f)\Rightarrow(a)$. Postmultiplying $XAA^{*}=A^{{\textcircled{w}}_{m}}AA^{*}$ by $(A^{\dag})^{*}$$A^{{\textcircled{w}}_{m},\dag}$, we get that $$XAA^{{\textcircled{w}}_{m},\dag}=A^{{\textcircled{w}}_{m}}AA^{{\textcircled{w}}_{m},\dag}.$$ By $\mathcal{N}(X)=\mathcal{N}((A^{k})^{*}A^{m+1}A^{\dag})$, Lemma $\ref{l2.4}$ $(c)$ and Theorem $\ref{th3.10}$ $(c)$ and $(d)$, it follows that
$$XAA^{{\textcircled{w}}_{m},\dag}=X \ \ {\rm and} \ \ A^{{\textcircled{w}}_{m}}AA^{{\textcircled{w}}_{m},\dag}=A^{{\textcircled{w}}_{m},\dag},$$
which completes the proof.
\end{proof}
\end{theorem}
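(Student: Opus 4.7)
The plan is to close the cycle (a)$\Rightarrow$(b)$\Rightarrow$(c)$\Rightarrow$(d)$\Rightarrow$(e)$\Rightarrow$(f)$\Rightarrow$(a), which is the most economical route here because each of clauses (b)--(f) pairs a one-sided range or null-space condition with a matrix equation, and such a pair already pins $X$ down uniquely (this is essentially the $A^{(2)}_{\mathcal T,\mathcal S}$-style characterization used in Theorem \ref{th3.10}(c)). So closing the loop will automatically yield all fifteen pairwise equivalences.

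The short steps are (a)$\Rightarrow$(b), immediate from $A^{{\textcircled{w}}_{m},\dag}=A^{{\textcircled{w}}_{m}}P_A$ and Theorem \ref{th3.10}(b); (b)$\Rightarrow$(c), premultiplying by $A^*$; and (e)$\Rightarrow$(f), postmultiplying by $A^*$. For (c)$\Rightarrow$(d), I premultiply $A^*AX=A^*A_1A^\dag$ by $A^{k-1}(A^\dag)^*$ and simplify via $(A^\dag)^*A^*=(AA^\dag)^*=AA^\dag$ and $AA^\dag A=A$, collapsing the left side to $A^kX$ and the right side to $A^kA^{{\textcircled{w}}_{m}}AA^\dag$. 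I then recognise $A^kA^{{\textcircled{w}}_{m}}A$ as $A_1^k$ by iterating the identity $A_1^2=A^2A^{{\textcircled{w}}_{m}}A$, which itself follows from the projector formulas in Lemma \ref{l2.4}(b)--(c).

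For (d)$\Rightarrow$(e), I premultiply $A^kX=A_1^kA^\dag$ by $(A^{{\textcircled{w}}_{m}})^k$. The key fact is that $A^{{\textcircled{w}}_{m}}A$ is the projector onto $\mathcal R(A^k)$ along $\mathcal N((A^k)^*A^{m+1})$ (Lemma \ref{l2.4}(c)); since both $A^jX$ (by $\mathcal R(X)=\mathcal R(A^k)$) and $A^{{\textcircled{w}}_{m}}AA^\dag=A^{{\textcircled{w}}_{m},\dag}$ have range contained in $\mathcal R(A^k)$, I can telescope $(A^{{\textcircled{w}}_{m}})^kA^k$ down one power at a time, obtaining $X$ on the left and $A^{{\textcircled{w}}_{m},\dag}$ on the right. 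So $X=A^{{\textcircled{w}}_{m},\dag}$, which makes $XA=A^{{\textcircled{w}}_{m}}A$ automatic, and the null-space condition is inherited from Theorem \ref{th3.10}(b). For (f)$\Rightarrow$(a), I postmultiply by $(A^\dag)^*A^{{\textcircled{w}}_{m},\dag}$ and simplify using $A^*(A^\dag)^*=(A^\dag A)^*=A^\dag A$ and $AA^\dag A=A$. Idempotence of $A^{{\textcircled{w}}_{m}}A$ then reduces the right side to $A^{{\textcircled{w}}_{m},\dag}$, while the hypothesis $\mathcal N(X)=\mathcal N((A^k)^*A^{m+1}A^\dag)$ combined with $AA^{{\textcircled{w}}_{m},\dag}=P_{\mathcal R(A^k),\mathcal N((A^k)^*A^{m+1}A^\dag)}$ from Theorem \ref{th3.10}(d) reduces the left side to $X$.

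The main technical obstacle is the telescoping in (d)$\Rightarrow$(e): I need $A^{{\textcircled{w}}_{m}}A\cdot(A^jX)=A^jX$ and the analogous equality for $A^{{\textcircled{w}}_{m},\dag}$ at each of the $k$ levels of the reduction, and this hinges on verifying the range containments $\mathcal R(A^jX)\subseteq\mathcal R(A^k)$ step by step, together with invoking the identity $A_1^k=A^kA^{{\textcircled{w}}_{m}}A$ at precisely the right moment on the right-hand side. Every other implication amounts to a one-line multiplication together with projector bookkeeping already available from Section 3.
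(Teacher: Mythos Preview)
Your proposal is correct and follows essentially the same route as the paper's own proof: the same cycle $(a)\Rightarrow(b)\Rightarrow\cdots\Rightarrow(f)\Rightarrow(a)$, the same pre/postmultipliers ($A^*$, $A^{k-1}(A^\dag)^*$, $(A^{{\textcircled{w}}_{m}})^k$, $A^*$, $(A^\dag)^*A^{{\textcircled{w}}_{m},\dag}$), the same identity $A_1^k=A^kA^{{\textcircled{w}}_{m}}A$, and the same telescoping argument via the projector $A^{{\textcircled{w}}_{m}}A=P_{\mathcal R(A^k),\mathcal N((A^k)^*A^{m+1})}$. The only cosmetic difference is that for $A^{{\textcircled{w}}_{m}}AA^{{\textcircled{w}}_{m},\dag}=A^{{\textcircled{w}}_{m},\dag}$ you invoke idempotence of $A^{{\textcircled{w}}_{m}}A$ after expanding $A^{{\textcircled{w}}_{m},\dag}=A^{{\textcircled{w}}_{m}}AA^\dag$, whereas the paper cites Theorem~\ref{th3.10}(c)--(d) directly; these are equivalent one-line justifications.
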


Using $X=XAX$ which is one of the classical Penrose equations \cite{WWQ}, we characterize the $m$-WGMP inverse.
\begin{theorem}
Let $A\in\mathbb{C}^{n\times n}_{k}$ and $m\in\mathbb Z^{+}$. Let $A_{1}$ be as in Theorem $\ref{def3.1}$. Then the following conditions are equivalent:
\begin{itemize}
\item[(a)] $X=A^{{\textcircled{w}}_{m},\dag}$;
\item[(b)] $XAX=X$, $AX=A_{1}A^{\dag}$ and $XAA^{*}=A^{\textcircled{w}_{m}}AA^{*}$;
\item[(c)] $XAX=X$, $AX=A_{1}A^{\dag}$ and $XA^{k}=A^{\textcircled{w}_{m}}A^{k}$;
\item[(d)] $XAX=X$, $XA=A^{\textcircled{w}_{m}}A$ and $A^{k}X=A_{1}^{k}A^{\dag}$;
\item[(e)] $XAX=X$, $XA=A^{\textcircled{w}_{m}}A$ and $A^{*}AX=A^{*}A_{1}A^{\dag}$.
\end{itemize}
\begin{proof}
That $(a)$ implies all other items $(b)-(e)$ can be directly verified by Theorem $\ref{th4.1}$ $(c)$, $(d)$ and $(f)$ and Definition $\ref{def3.5}$.

$(b)\Rightarrow(a)$. Since $XAX=X$ and $AX=A_{1}A^{\dag}$, we get that $\mathcal{N}(X)=\mathcal{N}(AX)=\mathcal{N}((A^{k})^{*}A^{m+1}A^{\dag}).$ By Theorem \ref{th4.1} $(f)$, we get $X=A^{{\textcircled{w}}_{m},\dag}.$

 $(c)\Rightarrow(a)$. By Theorem \ref{th3.10} $(c)$ and $(d)$, we get that $AA^{\textcircled{w}_{m}}=A(AA^{\textcircled{w}_{m}})A^{\textcircled{w}_{m}}=\cdots=A^{k}(A^{\textcircled{w}_{m}})^{k}$, together with the conditions of $(b)$, it follows that \begin{eqnarray*}
 X&=&XAX=XA_{1}A^{\dag}=XAA^{\textcircled{w}_{m}}AA^{\dag}=XA^{k}(A^{\textcircled{w}_{m}})^{k}AA^{\dag}\\&=&A^{\textcircled{w}_{m}}A^{k}(A^{\textcircled{w}_{m}})^{k}AA^{\dag}
 =A^{\textcircled{w}_{m}}AA^{\textcircled{w}_{m}}AA^{\dag}=A^{\textcircled{w}_{m},\dag}.
 \end{eqnarray*}

 $(d)\Rightarrow(a)$. By $XAX=X$ and $XA=A^{\textcircled{w}_{m}}A$, it follows that $\mathcal{R}(X)=\mathcal{R}(XA)=\mathcal{R}(A^{\textcircled{w}_{m}}A)=\mathcal{R}(A^{k})$. According to Theorem \ref{th4.1} $(d)$, we get $X=A^{{\textcircled{w}}_{m},\dag}.$

  $(e)\Rightarrow(a)$. Since $XAX=X$ and $XA=A^{\textcircled{w}_{m}}A$, it follows that $\mathcal{R}(X)=\mathcal{R}(XA)=\mathcal{R}(A^{\textcircled{w}_{m}}A)=\mathcal{R}(A^{k})$. By Theorem \ref{th4.1} $(c)$, we get $X=A^{{\textcircled{w}}_{m},\dag}.$
\end{proof}
\end{theorem}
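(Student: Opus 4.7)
The plan is to prove the equivalence in two phases: first I would show that (a) implies each of (b)--(e) directly, and then I would close each of (b)--(e) back to (a) by translating the hypotheses into the range-space or null-space information needed to invoke a suitable branch of Theorem~\ref{th4.1}.

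For the forward direction, I start from $X=A^{{\textcircled{w}}_{m},\dag}$. Definition~\ref{def3.5} immediately supplies $XAX=X$, $AX=A_{1}A^{\dag}$ and $XA=A^{D}A_{1}$. Since $A_{1}=AA^{{\textcircled{w}}_{m}}A$ has range inside $\mathcal{R}(A^{k})$ and $A^{D}A$ acts as the identity on $\mathcal{R}(A^{k})$ by Lemma~\ref{l2.3}(b), this collapses to $XA=A^{{\textcircled{w}}_{m}}A$. Postmultiplying by $A^{*}$ produces the third clause of (b); postmultiplying by $A^{k-1}$ produces that of (c); premultiplying $AX=A_{1}A^{\dag}$ by $A^{*}$ gives that of (e); and $A^{k}X=A^{k}A^{{\textcircled{w}}_{m}}AA^{\dag}=A_{1}^{k}A^{\dag}$ gives that of (d), where the identity $A_{1}^{k}=A^{k}A^{{\textcircled{w}}_{m}}A$ isolated in the proof of Theorem~\ref{th4.1} is what closes the calculation.

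For the converse, the organizing fact is that $XAX=X$ automatically forces $\mathcal{N}(X)=\mathcal{N}(AX)$ and $\mathcal{R}(X)=\mathcal{R}(XA)$. In (b), $AX=A_{1}A^{\dag}=AA^{{\textcircled{w}}_{m},\dag}$, so $\mathcal{N}(X)=\mathcal{N}(AA^{{\textcircled{w}}_{m},\dag})=\mathcal{N}(A^{{\textcircled{w}}_{m},\dag})=\mathcal{N}((A^{k})^{*}A^{m+1}A^{\dag})$ by Theorem~\ref{th3.10}(b) and (d); coupled with the third hypothesis, Theorem~\ref{th4.1}(f) delivers (a). In (d) and (e), $XA=A^{{\textcircled{w}}_{m}}A$ combined with $XAX=X$ yields $\mathcal{R}(X)=\mathcal{R}(A^{{\textcircled{w}}_{m}}A)=\mathcal{R}(A^{k})$, after which Theorem~\ref{th4.1}(d) respectively~(c) finishes the argument.

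The hard part will be (c)$\Rightarrow$(a), because neither a clean range description nor a clean null-space description of $X$ is immediate from the hypotheses alone. The plan is to iterate $X=XAX=XA_{1}A^{\dag}=XAA^{{\textcircled{w}}_{m}}AA^{\dag}$ and then to substitute $AA^{{\textcircled{w}}_{m}}=A^{k}(A^{{\textcircled{w}}_{m}})^{k}$, an identity that can be verified on the core--EP normal form (\ref{eq1.1})--(\ref{e.mwg}) since both products reduce to the same block-triangular matrix with identity top-left block. Once this substitution is in hand, the hypothesis $XA^{k}=A^{{\textcircled{w}}_{m}}A^{k}$ collapses the expression through $A^{{\textcircled{w}}_{m}}A^{k}(A^{{\textcircled{w}}_{m}})^{k}AA^{\dag}=A^{{\textcircled{w}}_{m}}AA^{\dag}=A^{{\textcircled{w}}_{m},\dag}$, completing the proof.
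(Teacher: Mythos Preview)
Your proposal is correct and follows essentially the same route as the paper's proof: each of (b), (d), (e) is closed back to (a) by extracting $\mathcal{N}(X)$ or $\mathcal{R}(X)$ from $XAX=X$ and then invoking the matching clause of Theorem~\ref{th4.1}, while (c)$\Rightarrow$(a) is handled by the same chain $X=XA_{1}A^{\dag}=XA^{k}(A^{{\textcircled{w}}_{m}})^{k}AA^{\dag}=A^{{\textcircled{w}}_{m}}AA^{\dag}$. The only cosmetic difference is that the paper obtains the identity $AA^{{\textcircled{w}}_{m}}=A^{k}(A^{{\textcircled{w}}_{m}})^{k}$ by iterating $AA^{{\textcircled{w}}_{m}}=A(AA^{{\textcircled{w}}_{m}})A^{{\textcircled{w}}_{m}}$, whereas you verify it on the core--EP block form; both are valid.
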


Notice that the Drazin inverse is defined by using $A^{D}A^{k+1}=A^{k}$ and other two equations \cite{D}. By the fact that $A^{{\textcircled{w}}_{m},\dag}A^{k+1}=A^{k}$, we present similarly the characterizations of the $m$-WGMP inverse.
\begin{theorem}\label{th4.4}
Let $A\in\mathbb{C}^{n\times n}_{k}$ and $m\in\mathbb Z^{+}$. Let $A_{1}$ be as in Theorem $\ref{def3.1}$. Then the following statements are equivalent:
\begin{itemize}
\item[(a)] $X=A^{{\textcircled{w}}_{m},\dag}$;
\item[(b)]  $XA^{k+1}=A^{k}$, $r(X)=r(A^{k})$ and $A^{*}AX=A^{*}A_{1}A^{\dag}$;
\item[(c)] $XA^{k+1}=A^{k}$, $r(X)=r(A^{k})$ and $AX=A_{1}A^{\dag}$;
\item[(d)] $XA^{k+1}=A^{k}$, $r(X)=r(A^{k})$ and $A^{k}X=A_{1}^{k}A^{\dag}$.
\end{itemize}

\begin{proof}
 $(a)\Rightarrow(b)$. It is a direct consequence of Theorem \ref{th3.10} $(e)$ and Theorem \ref{th4.1} $(c)$.

  $(b)\Rightarrow(c)$. Premultiplying $A^{*}AX=A^{*}A_{1}A^{\dag}$ by $(A^{\dag})^{*}$, we get $AX=AA^{\dag}AA^{{\textcircled{w}}_{m}}AA^{\dag}=A_{1}A^{\dag}.$

  $(c)\Rightarrow(d)$. Premultiplying $AX=A_{1}A^{\dag}$ by $A^{k-1}$, and using $(\ref{eq...})$, we have $A^{k}X=A^{k}A^{{\textcircled{w}}_{m}}AA^{\dag}=A_{1}^{k}A^{\dag}$.

  $(d)\Rightarrow(a)$. From $XA^{k+1}=A^{k}$ and $r(X)=r(A^{k})$, we get $\mathcal{R}(X)=\mathcal{R}(A^{k})$. Hence $X=A^{{\textcircled{w}}_{m},\dag}$ by Theorem $\ref{th4.1}$ $(d)$.
\end{proof}
\end{theorem}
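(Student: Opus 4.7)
The plan is to close a cyclic chain of implications $(a) \Rightarrow (b) \Rightarrow (c) \Rightarrow (d) \Rightarrow (a)$, which is natural because the three conditions inside items $(b)$--$(d)$ share a common pair ($XA^{k+1}=A^{k}$ and $r(X)=r(A^{k})$) and differ only in the third equation, so moving between them should reduce to a single premultiplication each time. The main work is then concentrated on $(a)\Rightarrow(b)$ (pulling ingredients out of the already-proved structure results) and $(d)\Rightarrow(a)$ (recovering $X=A^{\textcircled{w}_m,\dag}$ from the weakened hypotheses).

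For $(a)\Rightarrow(b)$, I would combine three earlier outputs: the rank identity $r(A^{\textcircled{w}_m,\dag})=r(A^{k})$ from Theorem~\ref{th3.10}(a); the projector formula $A^{\textcircled{w}_m,\dag}A=P_{\mathcal{R}(A^{k}),\mathcal{N}((A^{k})^{*}A^{m+1})}$ from Theorem~\ref{th3.10}(e), which fixes $A^{k}\in\mathcal{R}(A^{k})$ and hence forces $A^{\textcircled{w}_m,\dag}A^{k+1}=A^{k}$; and the equation $A^{*}AX=A^{*}A_{1}A^{\dag}$ from Theorem~\ref{th4.1}(c). All three are essentially immediate once the correct reference is located, so I expect this step to be bookkeeping.

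The middle two implications should collapse to a premultiplication. For $(b)\Rightarrow(c)$, multiply $A^{*}AX=A^{*}A_{1}A^{\dag}$ on the left by $(A^{\dag})^{*}$; since $(A^{\dag})^{*}A^{*}=(AA^{\dag})^{*}=AA^{\dag}$, and since $AA^{\dag}A=A$ and $AA^{\dag}A_{1}=AA^{\dag}AA^{\textcircled{w}_m}A=AA^{\textcircled{w}_m}A=A_{1}$, the equation collapses to $AX=A_{1}A^{\dag}$. For $(c)\Rightarrow(d)$, multiply $AX=A_{1}A^{\dag}$ on the left by $A^{k-1}$ and invoke the already-established identity $A_{1}^{k}=A^{k}A^{\textcircled{w}_m}A$ (equation $(\ref{eq...})$ in the paper) to get $A^{k}X=A^{k-1}A_{1}A^{\dag}=A^{k}A^{\textcircled{w}_m}AA^{\dag}=A_{1}^{k}A^{\dag}$.

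The step I expect to need the most care is $(d)\Rightarrow(a)$, because the hypothesis only supplies the rank and a $k$-th power equation rather than a direct range equality. The trick is that $XA^{k+1}=A^{k}$ immediately yields $\mathcal{R}(A^{k})\subseteq\mathcal{R}(X)$, so combined with $r(X)=r(A^{k})$ it upgrades to $\mathcal{R}(X)=\mathcal{R}(A^{k})$. With this range equality in hand, the remaining hypothesis $A^{k}X=A_{1}^{k}A^{\dag}$ is exactly the second condition of Theorem~\ref{th4.1}(d), which concludes $X=A^{\textcircled{w}_m,\dag}$. No new calculation is needed beyond this range-rank bootstrap, so the theorem should follow cleanly.
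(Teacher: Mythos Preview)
Your proposal is correct and follows essentially the same route as the paper's proof: the same cyclic chain $(a)\Rightarrow(b)\Rightarrow(c)\Rightarrow(d)\Rightarrow(a)$, the same premultiplications by $(A^{\dag})^{*}$ and $A^{k-1}$ for the middle steps, and the same range--rank bootstrap combined with Theorem~\ref{th4.1}(d) to close the loop. Your write-up is in fact slightly more explicit than the paper's in justifying $AA^{\dag}A_{1}=A_{1}$ and the projector argument for $A^{\textcircled{w}_m,\dag}A^{k+1}=A^{k}$, but the underlying argument is identical.
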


It is well-known that $X=A^{\dag}$ if and only if $AX=P_{A}$ and $XA=P_{A^{*}}$. However, we have that $AX=P_{\mathcal{R}(A^{k}),\mathcal{N}((A^{k})^{*}A^{m+1}A^{\dag})}$ and
$XA=P_{\mathcal{R}(A^{k}),\mathcal{N}((A^{k})^{*}A^{m+1})}$ when $X=A^{\textcircled{w}_{m},\dag}$. Conversely, it is invalid.
\begin{sloppypar}
\begin{example}
Let
$A= \left[\begin{array}{cc}
I_{3} & L\\
0 & N \\
\end{array}
\right]
 $and
 $X= \left[\begin{array}{cc}
I_{3} & 0\\
0 & L \\
\end{array}
\right],
 $
 where $N= \left[\begin{array}{ccc}
0 & 1 & 0\\
0 & 0 & 1\\
0 & 0 & 0\\
\end{array}
\right]
 $ and
 $L= \left[\begin{array}{ccc}
0 & 0 & 1\\
0 & 0 & 0\\
0 & 0 & 0\\
\end{array}
\right].
 $
 Then it is clear that $k=${\rm Ind}$(A)=3$ and
$A^{\textcircled{w}_{2},\dag}= \left[\begin{array}{cccccc}
I_{3} & 0\\
0 & 0 \\
\end{array}
\right].
$
It can be directly verified that $AX=P_{\mathcal{R}(A^{3}),\mathcal{N}((A^{3})^{*}A^{3}A^{\dag})}$ and $
XA=P_{\mathcal{R}(A^{3}),\mathcal{N}((A^{3})^{*}A^{3})}$. However, $X\neq A^{\textcircled{w}_{2},\dag}.$
\end{example}
\end{sloppypar}

 Therefore, we characterize the $m$-WGMP inverse by $AX=P_{\mathcal{R}(A^{k}),\mathcal{N}((A^{k})^{*}A^{m+1}A^{\dag})}$ and
$XA=P_{\mathcal{R}(A^{k}),\mathcal{N}((A^{k})^{*}A^{m+1})}$.
\begin{theorem}\label{th4.44}
Let $A\in\mathbb{C}^{n\times n}_{k}$ and $m\in\mathbb Z^{+}$. Then the following statements are equivalent:
\begin{itemize}
\item[(a)] $X=A^{{\textcircled{w}}_{m},\dag}$;
\item[(b)] $AX=P_{\mathcal{R}(A^{k}),\mathcal{N}((A^{k})^{*}A^{m+1}A^{\dag})}, XA=P_{\mathcal{R}(A^{k}),\mathcal{N}((A^{k})^{*}A^{m+1})}$ and $r(X)=r(A^{k})$;
\item[(c)] $AX=P_{\mathcal{R}(A^{k}),\mathcal{N}((A^{k})^{*}A^{m+1}A^{\dag})}, XA=P_{\mathcal{R}(A^{k}),\mathcal{N}((A^{k})^{*}A^{m+1})}$ and $AX^{2}=X$;
\item[(d)] $AX=P_{\mathcal{R}(A^{k}),\mathcal{N}((A^{k})^{*}A^{m+1}A^{\dag})}, XA=P_{\mathcal{R}(A^{k}),\mathcal{N}((A^{k})^{*}A^{m+1})}$ and $XAX=X$.
\end{itemize}

\begin{proof}
 $(a)\Rightarrow(b)$. It is a direct consequence of Theorem \ref{th3.10} $(a)$, $(d)$ and $(e)$.

  $(b)\Rightarrow(c)$. Since $AX=P_{\mathcal{R}(A^{k}),\mathcal{N}((A^{k})^{*}A^{m+1}A^{\dag})}$ and $r(X)=r(A^{k})$, we get $AX^{2}=X$.

  $(c)\Rightarrow(d)$. From $AX^{2}=X$, we have $\mathcal{R}(X)=\mathcal{R}(AX^{2})=\cdots=\mathcal{R}(A^{k}X^{k+1})\subseteq \mathcal{R}(A^{k})$.
  Since $XA=P_{\mathcal{R}(A^{k}),\mathcal{N}((A^{k})^{*}A^{m+1})}$, $XAX=X$ immediately follows.

  $(d)\Rightarrow(a)$. By the conditions of $(d)$, we get that $\mathcal{R}(X)=\mathcal{R}(XA)=\mathcal{R}(A^{k})$ and $\mathcal{N}(X)=\mathcal{N}(AX)=\mathcal{N}((A^{k})^{*}A^{m+1}A^{\dag})$.
  Hence $X=A^{{\textcircled{w}}_{m},\dag}$ by Theorem \ref{th3.10} $(c)$.
\end{proof}
\end{theorem}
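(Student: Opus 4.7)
The plan is to prove the theorem by first deriving $(a)\Rightarrow(b)$ directly from Theorem \ref{th3.10}, and then closing the cycle $(b)\Rightarrow(c)\Rightarrow(d)\Rightarrow(a)$. For $(a)\Rightarrow(b)$ I would simply quote Theorem \ref{th3.10}(a), (d), (e): these give the rank equality $r(A^{\textcircled{w}_m,\dag})=r(A^{k})$ together with the two projector representations of $AA^{\textcircled{w}_m,\dag}$ and $A^{\textcircled{w}_m,\dag}A$, so (b) is immediate.

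The common thread in the remaining implications is to pin down $\mathcal{R}(X)\subseteq\mathcal{R}(A^{k})$, because once this inclusion is in hand the projector identity for either $AX$ or $XA$ acts as the identity on $X$. In $(b)\Rightarrow(c)$ I would read off $\mathcal{R}(XA)=\mathcal{R}(A^{k})\subseteq\mathcal{R}(X)$ from the projector form of $XA$, then use $r(X)=r(A^{k})$ to upgrade the inclusion to equality; since $AX$ is a projector onto $\mathcal{R}(X)=\mathcal{R}(A^{k})$, multiplying the identity $AX=P_{\mathcal{R}(A^{k}),\mathcal{N}((A^{k})^{*}A^{m+1}A^{\dag})}$ on the right by $X$ yields $AX^{2}=X$. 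For $(c)\Rightarrow(d)$ the trick is to bootstrap $X=AX^{2}$ by iteration to reach $X=A^{k}X^{k+1}$, from which $\mathcal{R}(X)\subseteq\mathcal{R}(A^{k})$ follows; then $XAX=(XA)X=X$ because $XA$ acts as the identity on $\mathcal{R}(A^{k})$.

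For $(d)\Rightarrow(a)$ the range and null space of $X$ can be read off directly from the two projectors: $\mathcal{R}(X)=\mathcal{R}(XA)=\mathcal{R}(A^{k})$ and $\mathcal{N}(X)=\mathcal{N}(AX)=\mathcal{N}((A^{k})^{*}A^{m+1}A^{\dag})$. Combined with $XAX=X$, these give $X=A^{(2)}_{\mathcal{R}(A^{k}),\mathcal{N}((A^{k})^{*}A^{m+1}A^{\dag})}$, which by Theorem \ref{th3.10}(c) is precisely $A^{\textcircled{w}_m,\dag}$.

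I expect the main obstacle to be the bookkeeping in $(c)\Rightarrow(d)$: passing from $AX^{2}=X$ to $X=A^{k}X^{k+1}$ requires iterating $k$ times, and one must be careful that only the weaker inclusion $\mathcal{R}(X)\subseteq\mathcal{R}(A^{k})$ is needed before the projector identity for $XA$ is invoked, since the rank hypothesis present in (b) is no longer available. Elsewhere the proof is essentially a matter of noting that a projector written in the form $P_{\mathcal{L},\mathcal{M}}$ fixes every vector of $\mathcal{L}$ and kills every vector of $\mathcal{M}$, and applying this to $X$ on the correct side.
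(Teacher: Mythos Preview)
Your proposal is correct and follows essentially the same route as the paper's proof: the cycle $(a)\Rightarrow(b)\Rightarrow(c)\Rightarrow(d)\Rightarrow(a)$, with $(a)\Rightarrow(b)$ drawn from Theorem~\ref{th3.10}, the rank condition in $(b)$ used to force $\mathcal{R}(X)=\mathcal{R}(A^{k})$ and hence $AX^{2}=X$, the iteration $X=AX^{2}=\cdots=A^{k}X^{k+1}$ to obtain $\mathcal{R}(X)\subseteq\mathcal{R}(A^{k})$ in $(c)\Rightarrow(d)$, and the outer-inverse characterization of Theorem~\ref{th3.10}(c) to close $(d)\Rightarrow(a)$. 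Your write-up is in fact more explicit than the paper's in justifying why $AX^{2}=X$ in step $(b)\Rightarrow(c)$, but the underlying argument is identical.
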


\section{Representations of the $m$-WGMP inverse}
In \cite{WLJ}, the authors provide some expressions of the $m$-weak group inverse by some generalized inverses. Based on them, we directly get several representations of the $m$-WGMP inverse.
\begin{theorem}\label{t5.1}
Let $A\in\mathbb{C}^{n\times n}_{k}$ and $m\in\mathbb{Z^{+}}$. Then the following statements hold:
\begin{itemize}
	\item[$(a)$]  $A^{\textcircled{w}_{m},\dag}=(A^{D})^{m+1}P_{A^{k}}A^{m}P_{A};$

	\item[$(b)$] $A^{\textcircled{w}_{m},\dag}=A^{k-m}(A^{k+1})^{\textcircled{\#}}A^{m}P_{A} \ (k\geq m);$

	\item[$(c)$] $A^{\textcircled{w}_{m},\dag}=(A^{k})^{\#}A^{k-m-1}P_{A^{k}}A^{m}P_{A} \ (k\geq m+1);$

	\item[$(d)$] $A^{\textcircled{w}_{m},\dag}=(A^{m+1}P_{A^{k}})^{\dag}A^{m}P_{A};$

	\item[$(e)$] $A^{\textcircled{w}_{m},\dag}=A^{m-1}P_{A^{k}}(A^{m})^{\textcircled{w}}P_{A}.$
\end{itemize}
\begin{proof}
It is a direct consequence of $(\ref{th3,7})$ and \cite[Theorem 5.1]{WLJ}.
\end{proof}
\end{theorem}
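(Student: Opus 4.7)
The plan is to exploit the compact identity $A^{\textcircled{w}_m,\dag}=A^{\textcircled{w}_m}P_A$ established in Theorem~\ref{th3.7}. This reduces the entire statement to a routine substitution: once one has, at hand, matching expressions for $A^{\textcircled{w}_m}$ itself, postmultiplication by $P_A$ produces the five desired expressions for $A^{\textcircled{w}_m,\dag}$. Accordingly, I would begin by recalling \cite[Theorem~5.1]{WLJ}, which gives the analogous five representations of $A^{\textcircled{w}_m}$, namely
\[
A^{\textcircled{w}_m}=(A^{D})^{m+1}P_{A^{k}}A^{m}=A^{k-m}(A^{k+1})^{\textcircled{\#}}A^{m}=(A^{k})^{\#}A^{k-m-1}P_{A^{k}}A^{m}=(A^{m+1}P_{A^{k}})^{\dag}A^{m}=A^{m-1}P_{A^{k}}(A^{m})^{\textcircled{w}},
\]
with the range restrictions $k\geq m$ and $k\geq m+1$ imposed on the second and third representations exactly as in \cite{WLJ}, so that the intermediate exponents are nonnegative integers.

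From here, each of the claims $(a)$--$(e)$ is obtained in a single line. For instance, to prove $(a)$ one writes
\[
A^{\textcircled{w}_m,\dag}=A^{\textcircled{w}_m}P_{A}=\bigl((A^{D})^{m+1}P_{A^{k}}A^{m}\bigr)P_{A}=(A^{D})^{m+1}P_{A^{k}}A^{m}P_{A},
\]
invoking \eqref{th3,7} in the first equality and the first identity above in the second. The arguments for $(b)$--$(e)$ are identical in form, replacing the bracketed expression by the corresponding representation of $A^{\textcircled{w}_m}$ from \cite[Theorem~5.1]{WLJ}; the index restrictions in $(b)$ and $(c)$ are inherited verbatim from that reference.

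There is no serious obstacle here; the proof is essentially a chaining of two already-established results. The only point meriting mild caution is that $P_A=AA^{\dag}$ need not commute with any of the preceding factors, so in every representation one must keep $P_A$ strictly on the right end and resist the temptation to absorb it into the factor $(A^{m+1}P_{A^{k}})^{\dag}$ in $(d)$ or into $(A^{m})^{\textcircled{w}}$ in $(e)$. Once this ordering is respected, the five identities follow immediately from \eqref{th3,7} and \cite[Theorem~5.1]{WLJ}.
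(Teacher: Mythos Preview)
Your proposal is correct and follows exactly the same approach as the paper: both obtain each identity by combining $A^{\textcircled{w}_m,\dag}=A^{\textcircled{w}_m}P_A$ from \eqref{th3,7} with the corresponding representation of $A^{\textcircled{w}_m}$ in \cite[Theorem~5.1]{WLJ}. Your write-up simply makes explicit what the paper condenses into a single sentence.
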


 To present some limit expressions of the $m$-WGMP inverse, we need the following lemma.
\begin{lemma}{\rm\cite{YZ, GGA}}\label{l5.4}
Let $A\in\mathbb{C}^{m\times n}, X\in\mathbb{C}^{n\times p}$ and $Y\in\mathbb{C}^{p\times m}.$ Then the following statements are equivalent:
\begin{itemize}
	\item[$(a)$]  $\lim_{\lambda \to 0}X(\lambda I_{p}+YAX)^{-1}Y$ exists;

	\item[$(b)$] $A^{(2)}_{\mathcal{R}(XY),\mathcal{N}(XY)} $ exists,
\end{itemize}in which case
$$\begin{matrix}\lim_{\lambda \to 0}X(\lambda I_{p}+YAX)^{-1}Y=A^{(2)}_{\mathcal{R}(XY),\mathcal{N}(XY)}\end{matrix}.$$
Furthermore
\begin{equation}\label{xx5.1}
A^{\dag}=\begin{matrix}\lim_{\lambda \to 0}{(\lambda I_{n}+A^{*}A)^{-1}A^{*}}=\lim_{\lambda \to 0}{A^{*}(\lambda I_{n}+AA^{*})^{-1}}\end{matrix}.
\end{equation}

\end{lemma}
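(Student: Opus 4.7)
The plan is to exploit two push-through identities to collapse the quantity $X(\lambda I_p+YAX)^{-1}Y$ to a low-dimensional expression whose $\lambda\to 0$ limit is transparent. First I would note that $(\lambda I_n+XYA)X=X(\lambda I_p+YAX)$ for every $\lambda$, so inverting (for $\lambda$ outside a finite exceptional set) gives
\begin{equation*}
X(\lambda I_p+YAX)^{-1}Y=(\lambda I_n+BA)^{-1}B,\qquad B:=XY\in\mathbb{C}^{n\times m}.
\end{equation*}
Then I would take a full-rank factorization $B=UV$ with $U\in\mathbb{C}^{n\times s}$, $V\in\mathbb{C}^{s\times m}$ and $s=r(B)$. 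A second push-through $(\lambda I_n+UVA)U=U(\lambda I_s+VAU)$ gives
\begin{equation*}
X(\lambda I_p+YAX)^{-1}Y=U(\lambda I_s+VAU)^{-1}V,
\end{equation*}
valid for all $\lambda$ outside a finite set.

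Because $U$ has a left inverse and $V$ has a right inverse, the limit of this product as $\lambda\to 0$ exists if and only if $\lim_{\lambda\to 0}(\lambda I_s+VAU)^{-1}$ exists, which is equivalent to invertibility of the $s\times s$ matrix $VAU$. Assuming it, I would set $Z:=U(VAU)^{-1}V$ and verify by direct calculation that $ZAZ=Z$; the full column rank of $U$ and full row rank of $V$ then give $\mathcal{R}(Z)=\mathcal{R}(U)=\mathcal{R}(XY)$ and $\mathcal{N}(Z)=\mathcal{N}(V)=\mathcal{N}(XY)$, whence $Z=A^{(2)}_{\mathcal{R}(XY),\mathcal{N}(XY)}$ by the uniqueness of the outer inverse with prescribed range and null space. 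For the converse, if $A^{(2)}_{\mathcal{R}(XY),\mathcal{N}(XY)}$ exists then it forces $A\mathcal{R}(XY)\oplus\mathcal{N}(XY)=\mathbb{C}^m$ together with $\mathcal{R}(XY)\cap\mathcal{N}(A)=\{0\}$; if $(VAU)x=0$, then $AUx$ lies in both $A\mathcal{R}(XY)$ and $\mathcal{N}(V)=\mathcal{N}(XY)$, forcing $AUx=0$, and then $Ux\in\mathcal{R}(XY)\cap\mathcal{N}(A)=\{0\}$ forces $x=0$, so $VAU$ is invertible.

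The \emph{furthermore} clause follows by specialization. Taking $X=A^*$, $Y=I_m$ and $p=m$, we have $YAX=AA^*$ and $B=A^*$, so the general formula yields $\lim_{\lambda\to 0}A^*(\lambda I_m+AA^*)^{-1}=A^{(2)}_{\mathcal{R}(A^*),\mathcal{N}(A^*)}=A^\dag$, where the last equality uses that $A^\dag$ is the unique outer inverse with range $\mathcal{R}(A^*)$ and null space $\mathcal{N}(A^*)$. The mirror choice $X=I_n$, $Y=A^*$, $p=n$ gives $\lim_{\lambda\to 0}(\lambda I_n+A^*A)^{-1}A^*=A^\dag$ in exactly the same way.

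The main obstacle is the converse direction of the equivalence: extracting invertibility of the compressed matrix $VAU$ from the purely abstract existence of $A^{(2)}_{\mathcal{R}(XY),\mathcal{N}(XY)}$ requires first translating the outer-inverse existence statement into the geometric decomposition $A\mathcal{R}(XY)\oplus\mathcal{N}(XY)=\mathbb{C}^m$ together with the injectivity of $A$ on $\mathcal{R}(XY)$, and then running a short dimension argument on the $s\times s$ block. A minor technicality is legitimizing the push-through identities at $\lambda$-values where the relevant matrices are singular, but this is handled by observing that both sides are rational matrix functions of $\lambda$ agreeing on a cofinite set, so the limit computation transfers without modification.
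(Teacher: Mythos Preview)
The paper does not prove this lemma at all: it is quoted from the literature (references \cite{YZ,GGA}) and simply stated as a tool for the limit representations in Theorem~\ref{thm3.15t}. So there is no ``paper's own proof'' to compare against.

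That said, your argument is correct and is essentially the standard one. The two push-through identities reduce the question to the invertibility of the $s\times s$ compression $VAU$ coming from a full-rank factorization $XY=UV$; the forward direction then identifies the limit $U(VAU)^{-1}V$ with the outer inverse by checking $ZAZ=Z$ together with $\mathcal R(Z)=\mathcal R(U)=\mathcal R(XY)$ and $\mathcal N(Z)=\mathcal N(V)=\mathcal N(XY)$, and the converse recovers the invertibility of $VAU$ from the geometric splitting $A\,\mathcal R(XY)\oplus\mathcal N(XY)=\mathbb C^{m}$ and the injectivity of $A$ on $\mathcal R(XY)$. The specialization to $A^{\dag}$ via $X=A^{*}$, $Y=I_m$ (and its mirror) is exactly right. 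The only cosmetic point is that your remark about rational dependence on $\lambda$ is the clean way to handle the finitely many exceptional $\lambda$; you have already said this.
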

\begin{theorem}\label{thm3.15t}
Let $A\in\mathbb{C}^{n\times n}_{k}$ and $m\in\mathbb Z^{+}$. Then the following assertions are hold:
\begin{itemize}
\item[(a)] $A^{{\textcircled{w}}_{m}, \dag}=\begin{matrix}\lim_{\lambda \to 0}{A^{k}(\lambda I_{n}+A^{k}(A^{k})^{*}A^{m+1})^{-1}}(A^{k})^{*}A^{m+1}(\lambda I_{n}+A^{*}A)^{-1}A^{*}
\end{matrix};$
\item[(b)] $A^{{\textcircled{w}}_{m}, \dag}=\begin{matrix}\lim_{\lambda \to 0}{A^{k}(A^{k})^{*}(\lambda I_{n}+A^{k+m+1}(A^{k})^{*})^{-1}}A^{m+1}(\lambda I_{n}+A^{*}A)^{-1}A^{*}
\end{matrix};$
\item[(c)] $A^{{\textcircled{w}}_{m}, \dag}=\begin{matrix}\lim_{\lambda \to 0}{(\lambda I_{n}+A^{k}(A^{k})^{*}A^{m+1})^{-1}}A^{k}(A^{k})^{*}A^{m+1}(\lambda I_{n}+A^{*}A)^{-1}A^{*}
\end{matrix}.$
\end{itemize}
\begin{proof}
$(a)$. It is easy to check that $r(A^{k}(A^{k})^{*}A^{m+1}A^{\dag})=r((A^{k})^{*}A^{m+1}A^{\dag})=r(A^{k})=t$, which implies that
 $\mathcal{R}(A^{k})=\mathcal{R}(A^{k}(A^{k})^{*}A^{m+1}A^{\dag})$ and $\mathcal{N}((A^{k})^{*}A^{m+1}A^{\dag})=\mathcal{N}(A^{k}(A^{k})^{*}A^{m+1}A^{\dag})$.
From Theorem $\ref{th3.10}$ $(c)$, we get
$$
A^{\textcircled{w}_{m}}=A^{(2)}_{\mathcal{R}(A^{k}),\mathcal{N}((A^{k})^{*}A^{m+1}A^{\dag})}=
A^{(2)}_{\mathcal{R}(A^{k}(A^{k})^{*}A^{m+1}A^{\dag}),\mathcal{N}(A^{k}(A^{k})^{*}A^{m+1}A^{\dag})}.$$
 Let $X=A^{k}$ and $Y=(A^{k})^{*}A^{m+1}A^{\dag}$. By  Lemma \ref{l5.4} and $(\ref{xx5.1})$, we get that
$$A^{\textcircled{w}_{m}}=\begin{matrix}\lim_{\lambda \to 0}{A^{k}(\lambda I_{n}+A^{k}(A^{k})^{*}A^{m+1})^{-1}}(A^{k})^{*}A^{m+1}(\lambda I_{n}+A^{*}A)^{-1}A^{*}
\end{matrix}.$$

\noindent The statements $(b)$--$(c)$ can be similarly proved.
\end{proof}
\end{theorem}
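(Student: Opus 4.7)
The plan is to prove all three parts through a uniform scheme: identify $A^{\textcircled{w}_m,\dag}$ as $A^{(2)}_{\mathcal{R}(XY),\mathcal{N}(XY)}$ for a product $XY$ admitting three natural factorisations, then invoke Lemma \ref{l5.4} and use the limit expression (\ref{xx5.1}) for $A^\dag$ to fold everything into a single $\lambda$-limit.

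The common preparatory step is a rank calculation. Reading off the core-EP form $(\ref{eq1.1})$ together with $(\ref{e2.8})$--$(\ref{e2.12})$, one checks that $r((A^k)^* A^{m+1} A^\dag) = r(A^k) = t$; and since $\mathcal{N}(A^k) \cap \mathcal{R}((A^k)^*) = \{0\}$ in $\mathbb{C}^n$, left multiplication by $A^k$ preserves this rank, so $r(A^k(A^k)^* A^{m+1} A^\dag) = t$ as well. Combined with the obvious inclusions $\mathcal{R}(A^k(A^k)^* A^{m+1} A^\dag) \subseteq \mathcal{R}(A^k)$ and $\mathcal{N}((A^k)^* A^{m+1} A^\dag) \subseteq \mathcal{N}(A^k(A^k)^* A^{m+1} A^\dag)$, dimension counting forces equality in both, so that Theorem \ref{th3.10}$(c)$ upgrades to
\[
A^{\textcircled{w}_m,\dag} = A^{(2)}_{\mathcal{R}(A^k(A^k)^* A^{m+1} A^\dag),\,\mathcal{N}(A^k(A^k)^* A^{m+1} A^\dag)}.
\]

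For part $(a)$, I would split $A^k(A^k)^* A^{m+1} A^\dag = XY$ with $X = A^k$ and $Y = (A^k)^* A^{m+1} A^\dag$. Lemma \ref{l5.4} then gives $A^{\textcircled{w}_m,\dag} = \lim_{\lambda \to 0} A^k(\lambda I_n + YAX)^{-1} Y$; the Penrose identity $AA^\dag A = A$ (in the form $A^{m+1}A^\dag A = A^{m+1}$) simplifies $YAX$, and the standard commutation $X(\lambda I_n + YAX)^{-1} = (\lambda I_n + XYA)^{-1} X$ (equivalently $A^k(\lambda I_n + (A^k)^*A^{m+1}A^k)^{-1} = (\lambda I_n + A^k(A^k)^*A^{m+1})^{-1}A^k$) rearranges the factors so that $(\lambda I_n + A^k(A^k)^* A^{m+1})^{-1}$ sits in the stated position. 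Replacing the trailing $A^\dag$ by $\lim_{\lambda \to 0}(\lambda I_n + A^*A)^{-1}A^*$ from $(\ref{xx5.1})$ and running both limits simultaneously yields $(a)$.

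Parts $(b)$ and $(c)$ require only a change of bookkeeping: factor the kernel matrix as $X = A^k(A^k)^*$, $Y = A^{m+1}A^\dag$ for $(b)$, and as $X = I_n$, $Y = A^k(A^k)^* A^{m+1}A^\dag$ for $(c)$. In each case the ranges and null spaces of $XY$ are the same as before, and the same simplification $A^{m+1}A^\dag A = A^{m+1}$ turns $YAX$ into $A^{k+m+1}(A^k)^*$ in case $(b)$ and into $A^k(A^k)^* A^{m+1}$ in case $(c)$, matching the denominators inside the displayed inverses; substituting $(\ref{xx5.1})$ for the trailing $A^\dag$ concludes. The main obstacle is the preparatory rank equality $r(A^k(A^k)^* A^{m+1} A^\dag) = r(A^k)$, since without it we cannot replace $\mathcal{R}(A^k)$ and $\mathcal{N}((A^k)^*A^{m+1}A^\dag)$ by $\mathcal{R}(XY)$ and $\mathcal{N}(XY)$ inside Lemma \ref{l5.4}; once this rank count is secured, the remaining work is purely mechanical manipulation with the Penrose identity and the commutation formula.
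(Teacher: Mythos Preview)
Your approach mirrors the paper's exactly: the same rank equality $r(A^k(A^k)^*A^{m+1}A^\dag)=r(A^k)$, the same appeal to Theorem~\ref{th3.10}(c) to rewrite $A^{\textcircled{w}_m,\dag}$ as $A^{(2)}_{\mathcal{R}(XY),\mathcal{N}(XY)}$, and the same invocation of Lemma~\ref{l5.4} together with the limit formula~$(\ref{xx5.1})$ for $A^\dag$; the paper even takes the identical pair $X=A^k$, $Y=(A^k)^*A^{m+1}A^\dag$ for part~(a) and dismisses (b)--(c) as ``similarly proved'', so your explicit factorisations for those parts simply fill in what the paper omits.

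One caveat worth flagging: the push-through identity $X(\lambda I_n+YAX)^{-1}=(\lambda I_n+XYA)^{-1}X$ that you invoke moves $A^k$ to the \emph{right} of the resolvent $(\lambda I_n+A^k(A^k)^*A^{m+1})^{-1}$, producing $(\lambda I_n+A^k(A^k)^*A^{m+1})^{-1}A^k(A^k)^*A^{m+1}A^\dag$, which is the expression in~(c), not the one in~(a), where $A^k$ sits to the \emph{left} of that resolvent. The paper's own proof skips straight to the displayed formula without showing this step, so the mismatch may well originate in the statement of~(a) rather than in your argument; but as written, your commutation does not land on~(a).
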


\begin{remark}\label{rem5.5}
$(\lambda I_{n}+A^{*}A)^{-1}A^{*}$ in Theorem \ref{thm3.15t} $(a)$, $(b)$ and $(c)$ can be replaced by $A^{*}(\lambda I_{n}+AA^{*})^{-1}.$

\end{remark}

The following example will illustrate the effectiveness of the expressions in Theorem \ref{thm3.15t} for computing the $m$-WGMP inverse.
\begin{example}\label{ex5.6}
Let
\begin{eqnarray*}
\tiny A=\left[
\begin{array}{ccccccc}
   4.8990 + 7.3786i&   6.8197 + 3.0145i&   7.2244 + 1.2801i&   4.5380 + 1.9043i&   8.3138 + 3.7627i&   6.2797 + 3.8462i&   3.7241 + 9.8266i\\
   1.6793 + 2.6912i &  0.4243 + 7.0110i &  1.4987 + 9.9908i&   4.3239 + 3.6892i&   8.0336 + 1.9092i&   2.9198 + 5.8299i&   1.9812 + 7.3025i\\
   9.7868 + 4.2284i&   0.7145 + 6.6634i &  6.5961 + 1.7112i&   8.2531 + 4.6073i&   0.6047 + 4.2825i&   4.3165 + 2.5181i&   4.8969 + 3.4388i\\
   7.1269 + 5.4787i &  5.2165 + 5.3913i&   5.1859 + 0.3260i&   0.8347 + 9.8164i&   3.9926 + 4.8202i&   0.1549 + 2.9044i&   3.3949 + 5.8407i\\
   0.0000 + 0.0000i&   0.0000 + 0.0000i&   0.0000 + 0.0000i&   0.0000 + 0.0000i&   0.0000 + 0.0000i&   9.8406 + 6.1709i&   0.0000 + 0.0000i\\
   0.0000 + 0.0000i&   0.0000 + 0.0000i&   0.0000 + 0.0000i&   0.0000 + 0.0000i&   0.0000 + 0.0000i&   0.0000 + 0.0000i&   9.2033 + 9.0631i\\
   0.0000 + 0.0000i&   0.0000 + 0.0000i&   0.0000 + 0.0000i&   0.0000 + 0.0000i&   0.0000 + 0.0000i&   0.0000 + 0.0000i&   0.0000 + 0.0000i
   \end{array}
   \right]
\end{eqnarray*}
 It can be checked that $k=${\rm Ind}$(A)=3$. By the function pinv which is used to calculate the MP-inverse of a given matrix, we get
 \begin{eqnarray*}
 A^{\dag}= \tiny\left[
\begin{array}{ccccccc}
  -0.0228 - 0.0879i&   0.0168 + 0.0323i&   0.0512 - 0.0142i&   0.0139 + 0.0485i&   0.0025 + 0.0322i&  -0.0346 + 0.0354i&   0.0000 + 0.0000i\\
   0.0109 + 0.0512i&  -0.0172 - 0.0333i&  -0.0518 - 0.0195i&   0.0500 - 0.0302i&   0.0005 - 0.0096i&   0.0040 - 0.0063i&   0.0000 + 0.0000i\\
   0.0039 + 0.0192i&  -0.0188 - 0.0571i&   0.0155 + 0.0020i&   0.0026 + 0.0171i&  -0.0139 + 0.0171i&  -0.0100 + 0.0102i&   0.0000 + 0.0000i\\
   0.0263 + 0.0533i&  -0.0129 - 0.0019i&   0.0427 + 0.0287i&  -0.0611 - 0.0841i&  -0.0385 - 0.0161i&   0.0022 - 0.0026i&   0.0000 + 0.0000i\\
   0.0421 - 0.0455i&   0.0502 + 0.0334i&  -0.0428 - 0.0124i&   0.0016 + 0.0096i&  -0.0204 + 0.0015i&  -0.0411 - 0.0151i&   0.0000 + 0.0000i\\
  -0.0000 - 0.0000i&   0.0000 + 0.0000i&   0.0000 - 0.0000i&   0.0000 + 0.0000i&   0.0729 - 0.0457i&   0.0000 + 0.0000i&   0.0000 + 0.0000i\\
  -0.0000 + 0.0000i&   0.0000 + 0.0000i&   0.0000 - 0.0000i&   0.0000 + 0.0000i&   0.0000 - 0.0000i&   0.0552 - 0.0543i&   0.0000 + 0.0000i
   \end{array}
   \right].
\end{eqnarray*}
 By $(\ref{eq.1})$, we get

$ A^{\textcircled{\emph{\dag}}}=(A^{4}(A^{3})^{\dag})^{\dag}=$
 \begin{eqnarray*}\tiny\left[
\begin{array}{ccccccc}
  -0.0032 - 0.1411i&   0.0709 + 0.0429i&   0.0106 - 0.0097i&   0.0187 + 0.0560i &  0.0000 + 0.0000i&   0.0000 + 0.0000i &  0.0000 + 0.0000i\\
   0.0878 + 0.0001i&   0.0438 + 0.0326i&  -0.1096 - 0.0524i&   0.0490 - 0.0158i&   0.0000 + 0.0000i &  0.0000 + 0.0000i &  0.0000 + 0.0000i\\
  -0.0896 + 0.0173i & -0.0314 - 0.1473i &  0.0445 + 0.0629i&   0.0118 + 0.0056i&   0.0000 + 0.0000i &  0.0000 + 0.0000i&   0.0000 + 0.0000i\\
   0.0431 + 0.0788i & -0.0348 + 0.0182i&   0.0536 + 0.0096i & -0.0658 - 0.0844i&   0.0000 + 0.0000i &  0.0000 + 0.0000i &  0.0000 + 0.0000i\\
   0.0000 + 0.0000i &  0.0000 + 0.0000i &  0.0000 + 0.0000i&   0.0000 + 0.0000i&   0.0000 + 0.0000i &  0.0000 + 0.0000i &  0.0000 + 0.0000i\\
   0.0000 + 0.0000i&   0.0000 + 0.0000i &  0.0000 + 0.0000i&   0.0000 + 0.0000i&   0.0000 + 0.0000i &  0.0000 + 0.0000i &  0.0000 + 0.0000i\\
   0.0000 + 0.0000i &  0.0000 + 0.0000i&   0.0000 + 0.0000i&   0.0000 + 0.0000i&   0.0000 + 0.0000i&   0.0000 + 0.0000i &  0.0000 + 0.0000i
   \end{array}
   \right].
\end{eqnarray*}
Assume $m=2$. By $(\ref{eq.4})$, it follows that

$ A^{\textcircled{w}_{2},\dag}=
 A^{\textcircled{w}_{2}}AA^{\dag}=(A^{\textcircled{\emph{\dag}}})^3A^{3}A^{\dag}=$ \begin{eqnarray*}\tiny\left[
\begin{array}{ccccccc}
  -0.0032 - 0.1411i&   0.0709 + 0.0429i&   0.0106 - 0.0097i&   0.0187 + 0.0560i&  -0.0151 - 0.0310i&  -0.2502 + 0.2087i&  -0.1223 + 0.0627i\\
   0.0878 + 0.0001i&   0.0438 + 0.0326i&  -0.1096 - 0.0524i&   0.0490 - 0.0158i&  0.1788 + 0.2332i&   -0.3818 + 0.4296i&  -0.1105 + 0.0067i\\
  -0.0896 + 0.0173i&  -0.0314 - 0.1473i&   0.0445 + 0.0629i&   0.0118 + 0.0056i&  -0.0415 - 0.2864i&   0.7974 - 0.1768i&   0.2687 + 0.0926i\\
   0.0431 + 0.0788i&  -0.0348 + 0.0182i&   0.0536 + 0.0096i&  -0.0658 - 0.0844i&   0.0125 - 0.0262i&   0.0972 - 0.3512i&   0.0454 - 0.1317i\\
   0.0000 + 0.0000i&   0.0000 + 0.0000i&   0.0000 + 0.0000i&   0.0000 + 0.0000i&   0.0000 + 0.0000i&   0.0000 + 0.0000i&   0.0000 + 0.0000i\\
   0.0000 + 0.0000i&   0.0000 + 0.0000i&   0.0000 + 0.0000i&   0.0000 + 0.0000i&   0.0000 + 0.0000i&   0.0000 + 0.0000i&   0.0000 + 0.0000i\\
   0.0000 + 0.0000i&   0.0000 + 0.0000i&   0.0000 + 0.0000i&   0.0000 + 0.0000i&   0.0000 + 0.0000i&   0.0000 + 0.0000i&   0.0000 + 0.0000i
   \end{array}
   \right].
\end{eqnarray*}
Let $L_{1}=\begin{matrix}\lim_{\lambda \to 0}{A^{k}(\lambda I_{n}+A^{k}(A^{k})^{*}A^{m+1})^{-1}}(A^{k})^{*}A^{m+1}(\lambda I_{n}+A^{*}A)^{-1}A^{*}
\end{matrix}$. Then $L_{1}=$
\begin{eqnarray*}
 \small\tiny\left[
\begin{array}{ccccccc}
- 0.003189 - 0.1411i&   0.07088 + 0.04294i& 0.01062 - 0.009663i&   0.01868 + 0.05603i& - 0.01512 - 0.03101i& - 0.2502 + 0.2087i& 0.0000 + 0.0000i\\
  0.08776 + 0.00006i&   0.04379 + 0.03259i& - 0.1096 - 0.05243i&   0.04899 - 0.01576i&     0.1788 + 0.2332i& - 0.3818 + 0.4296i& 0.0000 + 0.0000i\\
 - 0.08964 + 0.01729i&  - 0.03136 - 0.1473i&  0.04449 + 0.06285i&  0.01184 + 0.005646i&  - 0.04154 - 0.2864i&   0.7974 - 0.1768i& 0.0000 + 0.0000i\\
   0.04314 + 0.07879i& - 0.03482 + 0.01822i& 0.05364 + 0.009637i& - 0.06583 - 0.08442i&   0.01248 - 0.02618i&  0.09717 - 0.3512i& 0.0000 + 0.0000i\\
   0.0000 + 0.0000i&   0.0000 + 0.0000i&   0.0000 + 0.0000i&   0.0000 + 0.0000i&   0.0000 + 0.0000i&   0.0000 + 0.0000i&   0.0000 + 0.0000i\\
   0.0000 + 0.0000i&   0.0000 + 0.0000i&   0.0000 + 0.0000i&   0.0000 + 0.0000i&   0.0000 + 0.0000i&   0.0000 + 0.0000i&   0.0000 + 0.0000i\\
   0.0000 + 0.0000i&   0.0000 + 0.0000i&   0.0000 + 0.0000i&   0.0000 + 0.0000i&   0.0000 + 0.0000i&   0.0000 + 0.0000i&   0.0000 + 0.0000i
   \end{array}
   \right]
\end{eqnarray*}
and $$r_{1}=\parallel A^{\textcircled{w}_{2},\dag}-L_{1} \parallel= 5.753\times10^{-11},$$ where $\parallel \cdot \parallel$ is the Frobenius norm.
Let
$$ L_{2}=\lim_{\lambda \to 0}{A^{3}(A^{3})^{*}(\lambda I_{n}+(A^{3})^{*}A^{6})^{-1}}A^{3}(\lambda I_{n}+A^{*}A)^{-1}A^{*}$$and
$$L_{3}=\lim_{\lambda \to 0}{(\lambda I_{n}+(A^{3})^{*}A^{6})^{-1}}A^{3}(A^{3})^{*}A^{3}(\lambda I_{n}+A^{*}A)^{-1}A^{*}. $$
Similarly, we get that
$$r_{2}=\parallel A^{\textcircled{w}_{2},\dag}-L_{2} \parallel= 7.595\times10^{-15}\ \ and\ \ r_{3}=\parallel A^{\textcircled{w}_{2},\dag}-L_{3} \parallel= 1.214\times10^{-16}.$$
\end{example}

\section{Applications of the $m$-WGMP inverse}
 Using the $m$-WGMP inverse, we solve the restricted matrix equation below.

 \begin{theorem} Let $A\in\mathbb{C}^{n\times n}_{k}$, $X\in\mathbb{C}^{q\times n}$ and $D\in\mathbb{C}^{q\times n}$. If $\mathcal{N}(D)\supseteq\mathcal{N}((A^{k})^{*}A^{m+1})$, then the restricted matrix equation
\begin{equation}\label{e7.1}
XA=D,\ \ \ \mathcal{N}(X)\supseteq\mathcal{N}((A^{k})^{*}A^{m+1}A^{\dag})
\end{equation}
has the unique solution $X=DA^{\textcircled{w}_{m},\dag}$.
\end{theorem}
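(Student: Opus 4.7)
The plan is to verify that $X = DA^{{\textcircled{w}}_{m}, \dag}$ satisfies both parts of the system, and then to establish uniqueness via the direct-sum decomposition of $\mathbb{C}^n$ afforded by the projector $AA^{{\textcircled{w}}_{m}, \dag}$ from Theorem \ref{th3.10}.

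For existence, I would first invoke Theorem \ref{th3.10}$(e)$ to write $A^{{\textcircled{w}}_{m}, \dag} A = P_{\mathcal{R}(A^{k}),\mathcal{N}((A^{k})^{*}A^{m+1})}$, so that $XA = D\, P_{\mathcal{R}(A^{k}),\mathcal{N}((A^{k})^{*}A^{m+1})}$. The hypothesis $\mathcal{N}(D) \supseteq \mathcal{N}((A^{k})^{*}A^{m+1})$ says exactly that $D$ annihilates the kernel of this oblique projector. Decomposing any $v \in \mathbb{C}^n$ as $v = v_1 + v_2$ with $v_1 \in \mathcal{R}(A^k)$ and $v_2 \in \mathcal{N}((A^k)^{*}A^{m+1})$, I would then compute $DPv = Dv_1 = D(v_1 + v_2) = Dv$, so that $DP = D$ and hence $XA = D$. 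The null-space requirement comes for free: Theorem \ref{th3.10}$(b)$ gives $\mathcal{N}(A^{{\textcircled{w}}_{m}, \dag}) = \mathcal{N}((A^k)^{*}A^{m+1}A^\dag)$, and this containment is inherited by any left multiple $D A^{{\textcircled{w}}_{m}, \dag}$.

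For uniqueness, suppose $X_1$ and $X_2$ both satisfy (\ref{e7.1}) and set $Y = X_1 - X_2$. Then $YA = 0$ and $\mathcal{N}(Y) \supseteq \mathcal{N}((A^k)^{*}A^{m+1}A^\dag)$, so $Y$ vanishes on $\mathcal{R}(A)$ and on $\mathcal{N}((A^k)^{*}A^{m+1}A^\dag)$. By Theorem \ref{th3.10}$(d)$, $AA^{{\textcircled{w}}_{m}, \dag} = P_{\mathcal{R}(A^k),\mathcal{N}((A^k)^{*}A^{m+1}A^\dag)}$, which gives the direct sum $\mathbb{C}^n = \mathcal{R}(A^k) \oplus \mathcal{N}((A^k)^{*}A^{m+1}A^\dag)$. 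Since $\mathcal{R}(A^k) \subseteq \mathcal{R}(A)$, every vector of $\mathbb{C}^n$ lies in $\mathcal{R}(A) + \mathcal{N}((A^k)^{*}A^{m+1}A^\dag)$, a sum of two subspaces on which $Y$ vanishes; hence $Y = 0$.

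The main obstacle is really just recognizing that the hypothesis on $\mathcal{N}(D)$ is precisely what converts the projector identity from Theorem \ref{th3.10}$(e)$ into the clean absorption $DP = D$; once that is established, the rest is bookkeeping with the range and null-space characterizations already proved in Theorem \ref{th3.10}.
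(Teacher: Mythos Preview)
Your argument is correct and rests on the same projector identities from Theorem~\ref{th3.10} that the paper uses, but the organization differs in a way worth noting. For existence, the paper takes a detour: it transposes to $A^{*}X^{*}=D^{*}$, argues abstractly that $\mathcal{R}(D^{*})\subseteq A^{*}\mathcal{R}((A^{m+1}A^{\dag})^{*}A^{k})$ to conclude solvability, and then simply asserts that $DA^{\textcircled{w}_{m},\dag}$ is a solution. Your direct verification via $DA^{\textcircled{w}_{m},\dag}A = D\,P_{\mathcal{R}(A^{k}),\mathcal{N}((A^{k})^{*}A^{m+1})} = D$ is cleaner and actually fills in what the paper leaves as ``obvious.'' For uniqueness, the paper's computation is a touch slicker than your difference argument: given any solution $X_{1}$, it writes $X_{1} = X_{1}\,AA^{\textcircled{w}_{m},\dag} = D\,A^{\textcircled{w}_{m},\dag}$ in one line, using first the null-space constraint on $X_{1}$ (so $X_{1}$ absorbs the projector $AA^{\textcircled{w}_{m},\dag}$) and then $X_{1}A=D$. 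Your version unpacks the same idea through the direct-sum decomposition, which is equally valid but slightly longer.
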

\begin{proof}
Notice that $(\ref{e7.1})$ is equivalent to the restricted
matrix equation
\begin{eqnarray*}
A^{*}X^{*}=D^{*},\ \ \ \mathcal{R}(X^{*})\subseteq\mathcal{R}((A^{m+1}A^{\dag})^{*}A^{k}).
\end{eqnarray*}
By $\mathcal{N}(D)\supseteq\mathcal{N}((A^{k})^{*}A^{m+1})$, we have $\mathcal{R}(D^{*})\subseteq\mathcal{R}((A^{m+1})^{*}A^{k})$. Since $A^{*}\mathcal{R}((A^{m+1}A^{\dag})^{*}A^{k})=\mathcal{R}((A^{m+1})^{*}A^{k})$ and $\mathcal{R}(D^{*})\subseteq\mathcal{R}((A^{m+1})^{*}A^{k})$, we get that $\mathcal{R}(D^{*})\subseteq A^{*}\mathcal{R}((A^{m+1}A^{\dag})^{*}A^{k})$, which implies solvability of $(\ref{e7.1})$. Obviously, $X=DA^{\textcircled{w}_{m},\dag}$ is a solution of $(\ref{e7.1})$.
Then we prove the uniqueness of $X$. If $X_{1}$ also satisfies $(\ref{e7.1})$, then $$X=DA^{\textcircled{w}_{m},\dag}=X_{1}AA^{\textcircled{w}_{m},\dag}=X_{1}P_{\mathcal{R}(A^{k}), \mathcal{N}((A^{k})^{*}A^{m+1}A^{\dag})}=X_{1},$$
which finishes the proof.
\end{proof}
The next theorem shows the inverse of the nonsingular bordered matrix constructed by the $m$-WGMP inverse.
\begin{theorem}\label{t7.1}
Let $A\in\mathbb{C}^{n\times n}_{k}$ be such that  $r(A^{k})=t$ and $m\in\mathbb{Z^{+}}$. Let $B\in\mathbb{C}^{n\times (n-t)}$ and $C^{*}\in\mathbb{C}^{n\times (n-t)}$ be of full column rank such that
$\mathcal{R}(B)=\mathcal{N}((A^{k})^{*}A^{m+1}A^{\dag})$ and $\mathcal{N}(C)=\mathcal{R}(A^{k})$.
Then the bordered matrix
\begin{equation}\label{al2}
K=\left[\begin{array}{cc}
A & B\\
C & 0\\
\end{array}\right]
\end{equation}
is nonsingular and its inverse is given by
\begin{eqnarray*}
K^{-1}=\left[\begin{array}{cc}
A^{\textcircled{w}_{m},\dag} &\ \ (I_{n}-A^{\textcircled{w}_{m},\dag}A)C^{\dag}\\
B^{\dag}(I_{n}-AA^{\textcircled{w}_{m},\dag}) &\ \ B^{\dag}(AA^{\textcircled{w}_{m},\dag}A-A)C^{\dag}\\
\end{array}\right].
\end{eqnarray*}\end{theorem}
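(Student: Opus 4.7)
Since $K$ is a square matrix of order $2n-t$ and the proposed inverse $M$ on the right hand side is also of size $(2n-t)\times(2n-t)$, it suffices to verify $KM = I_{2n-t}$ block by block; doing so yields both the nonsingularity of $K$ and the explicit form of $K^{-1}$ simultaneously.

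The workhorses are the characterizations of $A^{\textcircled{w}_{m},\dag}$ already proved in Theorem~\ref{th3.10}: $\mathcal{R}(A^{\textcircled{w}_{m},\dag})=\mathcal{R}(A^{k})$, $\mathcal{N}(A^{\textcircled{w}_{m},\dag})=\mathcal{N}((A^{k})^{*}A^{m+1}A^{\dag})$, together with the projector identities $AA^{\textcircled{w}_{m},\dag}=P_{\mathcal{R}(A^{k}),\mathcal{N}((A^{k})^{*}A^{m+1}A^{\dag})}$ and $A^{\textcircled{w}_{m},\dag}A=P_{\mathcal{R}(A^{k}),\mathcal{N}((A^{k})^{*}A^{m+1})}$. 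The full column rank hypotheses then provide $B^{\dag}B=I_{n-t}$ and $CC^{\dag}=I_{n-t}$, identify $BB^{\dag}$ as the orthogonal projector onto $\mathcal{R}(B)=\mathcal{N}((A^{k})^{*}A^{m+1}A^{\dag})$, and give $CA^{\textcircled{w}_{m},\dag}=0$ because $\mathcal{R}(A^{\textcircled{w}_{m},\dag})=\mathcal{R}(A^{k})=\mathcal{N}(C)$.

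With these in hand the four block identities fall into place. The $(2,1)$-block collapses to $CA^{\textcircled{w}_{m},\dag}=0$, and the $(2,2)$-block reduces to $C(I_{n}-A^{\textcircled{w}_{m},\dag}A)C^{\dag}=CC^{\dag}=I_{n-t}$, using $CA^{\textcircled{w}_{m},\dag}=0$ a second time. For the $(1,2)$-block I rewrite $A-AA^{\textcircled{w}_{m},\dag}A=(I_{n}-AA^{\textcircled{w}_{m},\dag})A$, whose range sits inside $\mathcal{N}(AA^{\textcircled{w}_{m},\dag})=\mathcal{R}(B)$; since $I_{n}-BB^{\dag}$ annihilates $\mathcal{R}(B)$, the factored expression $(I_{n}-BB^{\dag})(I_{n}-AA^{\textcircled{w}_{m},\dag})AC^{\dag}$ vanishes. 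The most delicate piece is the $(1,1)$-block $AA^{\textcircled{w}_{m},\dag}+BB^{\dag}(I_{n}-AA^{\textcircled{w}_{m},\dag})$: I again exploit that $\mathcal{R}(I_{n}-AA^{\textcircled{w}_{m},\dag})\subseteq\mathcal{R}(B)$ and that $BB^{\dag}$ acts as the identity on its own range, obtaining $BB^{\dag}(I_{n}-AA^{\textcircled{w}_{m},\dag})=I_{n}-AA^{\textcircled{w}_{m},\dag}$, after which the sum telescopes to $I_{n}$.

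The main obstacle I anticipate is bookkeeping rather than any deep idea: one has to keep $\mathcal{N}((A^{k})^{*}A^{m+1}A^{\dag})$ and $\mathcal{N}((A^{k})^{*}A^{m+1})$ distinct, and remember that while $AA^{\textcircled{w}_{m},\dag}$ and $A^{\textcircled{w}_{m},\dag}A$ share the range $\mathcal{R}(A^{k})$, their null spaces differ. Once those points are kept straight, the verification is mechanical, and computing $KM=I$ for the square matrix $K$ suffices to conclude both nonsingularity and the given formula for $K^{-1}$.
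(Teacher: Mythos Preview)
Your proposal is correct and follows essentially the same route as the paper: verify $KM=I$ block by block using $CA^{\textcircled{w}_{m},\dag}=0$, $CC^{\dag}=I_{n-t}$, and $BB^{\dag}(I_{n}-AA^{\textcircled{w}_{m},\dag})=I_{n}-AA^{\textcircled{w}_{m},\dag}$, the last coming from $\mathcal{R}(I_{n}-AA^{\textcircled{w}_{m},\dag})=\mathcal{R}(B)$. The paper's proof is exactly this computation, so there is nothing to add.
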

\begin{proof}Let
$
M=\left[\begin{array}{cc}
A^{\textcircled{w}_{m},\dag} &\ \ (I_{n}-A^{\textcircled{w}_{m},\dag}A)C^{\dag}\\
B^{\dag}(I_{n}-AA^{\textcircled{w}_{m},\dag}) &\ \ B^{\dag}(AA^{\textcircled{w}_{m},\dag}A-A)C^{\dag}\\
\end{array}\right].
$
Since $\mathcal{R}(A^{\textcircled{w}_{m},\dag})=\mathcal{R}(A^{k})=\mathcal{N}(C)$, we have that $CA^{\textcircled{w}_{m},\dag}=0$. Since $C$ is a full row rank matrix, it is right invertible and  $CC^{\dag}=I_{n-t}$. From  $$\mathcal{R}(I_{n}-AA^{\textcircled{w}_{m},\dag})=\mathcal{N}(AA^{\textcircled{w}_{m},\dag})=\mathcal{N}((A^{k})^{*}A^{m+1}A^{\dag})=\mathcal{R}(B)=\mathcal{R}(BB^{\dag}),$$ we get
$BB^{\dag}(I_{n}-AA^{\textcircled{w}_{m},\dag})=I_{n}-AA^{\textcircled{w}_{m},\dag}.$
Hence
\begin{eqnarray*}
KM&=&\left[\begin{array}{cc}
AA^{\textcircled{w}_{m},\dag}+BB^{\dag}(I_{n}-AA^{\textcircled{w}_{m},\dag}) &\ \ A(I_{n}-A^{\textcircled{w}_{m},\dag}A)C^{\dag}+BB^{\dag}(AA^{\textcircled{w}_{m},\dag}A-A)C^{\dag}\\
CA^{\textcircled{w}_{m},\dag} &\ \ C(I_{n}-A^{\textcircled{w}_{m},\dag}A)C^{\dag}\\
\end{array}\right]\\
&=&\left[\begin{array}{cc}
AA^{\textcircled{w}_{m},\dag}+I_{n}-AA^{\textcircled{w}_{m},\dag} &\ \ A(I_{n}-A^{\textcircled{w}_{m},\dag}A)C^{\dag}-(I_{n}-AA^{\textcircled{w}_{m},\dag})AC^{\dag}\\
0 &\ \ CC^{\dag}\\
\end{array}\right]\\
&=&\left[\begin{array}{cc}
I_{n} &\ \ 0\\
0 &\ \ I_{n-t}\\
\end{array}\right].
\end{eqnarray*}
Hence $M=K^{-1}$. \end{proof}
Finally, based on $(\ref{al2})$, we get the Cramer's rule \cite{WWQ} for the solution of $(\ref{e7.1})$.
\begin{theorem}\label{t7.3}
Let $A\in\mathbb{C}^{n\times n}_{k}$, $B\in\mathbb{C}^{n\times (n-t)}$ and $C^{*}\in\mathbb{C}^{n\times (n-t)}$ be as in Theorem \ref{t7.1}. Let $X\in\mathbb{C}^{q\times n}$ and $D\in\mathbb{C}^{q\times n}$ be as in Theorem \ref{e7.1}. Then the unique solution  of $(\ref{e7.1})$ is  given by $X=[x_{ij}]$, where
\begin{equation}\label{tt21.2}
x_{ij}=\dfrac{{\rm det}\left[\begin{array}{cc}
A(d_{i}\leftarrow j) & B(0\leftarrow j)\\
C &0\\
\end{array}\right]}{{\rm det}\left[\begin{array}{cc}
A & B\\
C & 0\\
\end{array}\right]}, \ i=1 ,2, ..., q, j=1, 2,  ..., n,
\end{equation}where $d_{i}$ denotes the $i$-th row of $D$ and $A(d_{i}\leftarrow j)$ and $B(0\leftarrow j)$ mean to substitute the $j$-th row of $A$ and $B$ by $d_{i}$ and $0$, respectively.
\end{theorem}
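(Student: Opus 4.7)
The plan is to reduce the restricted matrix equation $(\ref{e7.1})$ to an ordinary (unrestricted) linear system whose coefficient matrix is the nonsingular bordered matrix $K$ of Theorem \ref{t7.1}, and then invoke the classical Cramer's rule on that system. More precisely, I would consider the augmented row equation
\begin{equation*}
[\,X\ \ Y\,]\,K=[\,X\ \ Y\,]\left[\begin{array}{cc} A & B\\ C & 0 \end{array}\right]=[\,D\ \ 0\,],
\end{equation*}
where $Y\in\mathbb{C}^{q\times(n-t)}$ is an auxiliary unknown. Since $K$ is nonsingular by Theorem \ref{t7.1}, this system admits a unique solution $[\,X\ Y\,]=[\,D\ 0\,]K^{-1}$.

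The next step is to identify this unique solution with the one given by Theorem (the solvability result preceding this one). Using the explicit formula for $K^{-1}$ from Theorem \ref{t7.1}, a direct block multiplication yields $X=DA^{\textcircled{w}_{m},\dag}$ and $Y=D(I_{n}-A^{\textcircled{w}_{m},\dag}A)C^{\dag}$. The main subtlety, and the step that genuinely uses the hypothesis $\mathcal{N}(D)\supseteq\mathcal{N}((A^{k})^{*}A^{m+1})$, is to verify that this $Y$ vanishes: by Theorem \ref{th3.10}\,$(e)$, $A^{\textcircled{w}_{m},\dag}A=P_{\mathcal{R}(A^{k}),\mathcal{N}((A^{k})^{*}A^{m+1})}$, so $\mathcal{R}(I_{n}-A^{\textcircled{w}_{m},\dag}A)=\mathcal{N}((A^{k})^{*}A^{m+1})\subseteq\mathcal{N}(D)$, whence $D(I_{n}-A^{\textcircled{w}_{m},\dag}A)=0$ and therefore $Y=0$. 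Thus the block row $[\,X\ \ 0\,]$ is the unique solution of the augmented nonsingular system, and its first block coincides with the solution of $(\ref{e7.1})$.

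Once the restricted equation has been rewritten as the unrestricted row equation $[\,X\ \ 0\,]K=[\,D\ \ 0\,]$, I would apply the classical Cramer's rule row by row. For each $i\in\{1,\dots,q\}$, the $i$-th row of $[\,X\ \ 0\,]$ solves a nonsingular linear system with coefficient matrix $K^{T}$ and right-hand side $[\,d_{i}\ \ 0\,]^{T}$, so the $j$-th entry of that row equals $\det K$ in the denominator and, in the numerator, the determinant of $K$ with its $j$-th row replaced by $[\,d_{i}\ \ 0\,]$. For $1\le j\le n$, replacing the $j$-th row of $K$ by $[\,d_{i}\ \ 0\,]$ amounts to replacing simultaneously the $j$-th row of $A$ by $d_{i}$ and the $j$-th row of $B$ by $0$, leaving $C$ and the zero block untouched; this is exactly the matrix appearing in the numerator of $(\ref{tt21.2})$. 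Combining these ingredients yields the stated formula for $x_{ij}$.

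I expect the only nontrivial step to be the vanishing of the auxiliary block $Y$, since it is precisely there that the compatibility hypothesis on $\mathcal{N}(D)$ enters and makes the bordered-system reformulation equivalent to the original restricted equation; the remaining work is the bookkeeping of identifying the rows of $K$ that are being replaced with the rows of the block matrix in $(\ref{tt21.2})$.
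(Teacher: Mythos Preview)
Your approach is correct and follows the same overall strategy as the paper: rewrite the restricted equation as the row system $[\,X\ \ 0\,]K=[\,D\ \ 0\,]$ with the nonsingular bordered matrix $K$, and then read off the entries of $X$ via the classical Cramer's rule.

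The one place where you diverge is in justifying the vanishing of the second block. You introduce an auxiliary unknown $Y$, compute it from the explicit formula for $K^{-1}$, and then invoke the compatibility hypothesis $\mathcal{N}(D)\supseteq\mathcal{N}((A^{k})^{*}A^{m+1})$ together with Theorem~\ref{th3.10}\,$(e)$ to force $Y=0$. The paper instead works directly with the solution $X$ of $(\ref{e7.1})$: since the restricted equation already requires $\mathcal{N}(X)\supseteq\mathcal{N}((A^{k})^{*}A^{m+1}A^{\dag})=\mathcal{R}(B)$, one gets $XB=0$ immediately, and hence $[\,X\ \ 0\,]K=[\,XA\ \ XB\,]=[\,D\ \ 0\,]$ in one line. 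This bypasses both the explicit $K^{-1}$ formula and the hypothesis on $\mathcal{N}(D)$ (which the paper uses only in the preceding solvability theorem, not here). Your route is perfectly valid but slightly heavier; the paper's shortcut is worth knowing.
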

\begin{proof}
Since $X$ is the solution of $(\ref{e7.1})$, we get that $\mathcal{N}(X)\supseteq\mathcal{N}((A^{k})^{*}A^{m+1}A^{\dag})=\mathcal{R}(B)$, which implies $XB=0$. Then $(\ref{e7.1})$ can be rewritten as
$$
\left[\begin{array}{cccc}
X & 0
\end{array}\right]\left[\begin{array}{cc}
A & B\\
C & 0\\
\end{array}\right]=\left[\begin{array}{cccc}
XA & XB
\end{array}\right]=\left[\begin{array}{cccc}
D&0
\end{array}\right].
$$
By Theorem \ref{t7.1}, we have that $\left[\begin{array}{cc}
A & B\\
C & 0\\
\end{array}\right]$ is nonsingular. Consequently, $X$ can be expressed by $(\ref{tt21.2})$ by using the classical Cramer's rule.
\end{proof}

\begin{example}
Let
\begin{eqnarray*}
\tiny A=\left[
\begin{array}{ccccccc}
   4.8990 + 7.3786i&   6.8197 + 3.0145i&   7.2244 + 1.2801i&   4.5380 + 1.9043i&   8.3138 + 3.7627i&   6.2797 + 3.8462i&   3.7241 + 9.8266i\\
   1.6793 + 2.6912i &  0.4243 + 7.0110i &  1.4987 + 9.9908i&   4.3239 + 3.6892i&   8.0336 + 1.9092i&   2.9198 + 5.8299i&   1.9812 + 7.3025i\\
   9.7868 + 4.2284i&   0.7145 + 6.6634i &  6.5961 + 1.7112i&   8.2531 + 4.6073i&   0.6047 + 4.2825i&   4.3165 + 2.5181i&   4.8969 + 3.4388i\\
   7.1269 + 5.4787i &  5.2165 + 5.3913i&   5.1859 + 0.3260i&   0.8347 + 9.8164i&   3.9926 + 4.8202i&   0.1549 + 2.9044i&   3.3949 + 5.8407i\\
   0.0000 + 0.0000i&   0.0000 + 0.0000i&   0.0000 + 0.0000i&   0.0000 + 0.0000i&   0.0000 + 0.0000i&   9.8406 + 6.1709i&   0.0000 + 0.0000i\\
   0.0000 + 0.0000i&   0.0000 + 0.0000i&   0.0000 + 0.0000i&   0.0000 + 0.0000i&   0.0000 + 0.0000i&   0.0000 + 0.0000i&   9.2033 + 9.0631i\\
   0.0000 + 0.0000i&   0.0000 + 0.0000i&   0.0000 + 0.0000i&   0.0000 + 0.0000i&   0.0000 + 0.0000i&   0.0000 + 0.0000i&   0.0000 + 0.0000i
   \end{array}
   \right],
\end{eqnarray*}
 \begin{eqnarray*}\tiny B=\left[
\begin{array}{ccccccc}
  -0.1981 - 0.0535i&  -0.0151 + 0.0009i   &    0.0000 + 0.0000i\\
  -0.2455 - 0.0143i&   -0.0292 + 0.0316i   &    0.0000 - 0.0000i\\
  -0.0740 - 0.0930i&   -0.1353 + 0.1981i   &   -0.0000 - 0.0000i\\
  -0.0597 - 0.0996i&   -0.0039 + 0.1198i  & -0.0000 + 0.0000i\\
   0.6294 - 0.0000i&   -0.0743 - 0.3123i  & -0.0000 - 0.0000i\\
  -0.0743 + 0.3123i&    0.2299 - 0.0000i   & 0.0000 + 0.0000i\\
   0.0000 - 0.0000i&    0.0000 + 0.0000i   & 1.0000 + 0.0000i\\
   \end{array}
   \right],\ \
   \end{eqnarray*}
   \begin{eqnarray*}
  \tiny  C=\left[
\begin{array}{ccccccc}
   0.0000 + 0.0000i&   0.0000 + 0.0000i&   0.0000 + 0.0000i &  0.0000 + 0.0000i&   0.6047 + 4.2825i&   1.5961 + 1.7112i&   4.3165 + 4.3181i\\
   0.0000 + 0.0000i&   0.0000 + 0.0000i&   0.0000 + 0.0000i&   0.0000 + 0.0000i&   3.7241 + 9.8266i&   6.3138 + 4.7627i&   0.0000 + 6.3634i\\
   0.0000 + 0.0000i&   0.0000 + 0.0000i&   0.0000 + 0.0000i&   0.0000 + 0.0000i&   0.0000 + 0.0000i&   0.0000 + 0.0000i&   0.0000 + 9.2908i
   \end{array}
   \right]
\end{eqnarray*}
and
\begin{eqnarray*}
\tiny D&=&
\footnotesize{\tiny\left[
	\begin{array}{cccccccc}
 1.5062\times10^{9} - 4.8185\times10^{7}i& 1.2087\times10^{9} + 3.5089\times10^{8}i& 1.0936\times10^{9} - 1.6609\times10^{8}i\\
  1.3536\times10^{9} + 8.04522\times10^{5}i&  1.076\times10^{9} + 3.5073\times10^{8}i& 9.8797\times10^{8} - 1.1718\times10^{8}i\\
 1.1886\times10^{9} - 1.8899\times10^{7}i& 9.4962\times10^{8} + 2.9231\times10^{8}i& 8.6552\times10^{8} - 1.1718\times10^{8}i\\
 \end{array}
	\right.
}\end{eqnarray*}$$
 \footnotesize{ \left.
	\tiny	\begin{array}{ccccccc}
   1.2534\times10^{9} + 1.5442\times10^{8}i&  1.3783\times10^{9} - 1.264\times10^{8}i& 1.5788\times10^{9} - 3.6923\times10^{7}i& 2.1042\times10^{9} + 5.7947\times10^{8}i\\
  1.1206\times10^{9} + 1.7408\times10^{8}i& 1.2418\times10^{9} - 7.3545\times10^{7}i& 1.4197\times10^{9} + 1.2161\times10^{7}i& 1.8729\times10^{9} + 5.8204\times10^{8}i\\
   9.866\times10^{8} + 1.3695\times10^{8}i& 1.0892\times10^{9} - 8.2455\times10^{7}i& 1.2465\times10^{9} - 9.6168\times10^{6}i& 1.6528\times10^{9} + 4.8392\times10^{8}i\\
	\end{array}
	\right] }.$$
It is clear that $k=${\rm Ind}$(A)=3$. Assume $m=2$, it can be verified that $\mathcal{R}(B)=\mathcal{N}((A^{3})^{*}A^{2+1}A^{\dag})$, $\mathcal{N}(C)=\mathcal{R}(A^{3})$ and $\mathcal{N}(D)\supseteq\mathcal{N}((A^{3})^{*}A^{2+1})$.
Let $X_{1}=DA^{\textcircled{w}_{2},\dag}$ and let $X_{2}$ be given by $(\ref{tt21.2})$. Then we get that

$ X_{1}=10^{7}\times$
\begin{eqnarray*}
   \tiny \left[
\begin{array}{ccccccc}
   4.1209 - 4.2316i&   4.5108 - 2.2349i&    2.6300 - 3.5180i&    3.9987 - 3.0137i&    3.6710 - 3.8593i&    3.9631 - 3.9288i&    0.0000 + 0.0000i\\
   3.8306 - 3.6777i&    4.1158 - 1.8845i&    2.4601 - 3.0899i&    3.6761 - 2.5791i&    3.4097 - 3.3626i&    3.6749 - 3.4133i&    0.0000 + 0.0000i\\
   3.3086 - 3.2851i&   3.5868 - 1.7117i&    2.1165 - 2.7467i&    3.1912 - 2.3206i&    2.9453 - 3.0006i&    3.1768 - 3.0497i&    0.0000 + 0.0000i
   \end{array}
   \right]
\end{eqnarray*}and

$ X_{2}=10^{7}\times$
\begin{eqnarray*}
   \tiny \left[
\begin{array}{ccccccc}
    4.1209 - 4.2316i& 4.5108 - 2.2349i& 2.6300 - 3.518i& 3.9987 - 3.0137i&  3.671 - 3.8593i& 3.9631 - 3.9288i& - 7.3049\times10^{-21} - 3.4222\times10^{-22}i\\
    3.8306 - 3.6777i& 4.1158 - 1.8845i& 2.4601 - 3.0899i& 3.6761 - 2.5791i& 3.4097 - 3.3626i& 3.6749 - 3.4133i&   5.6399\times10^{-21} + 4.7633\times10^{-22}i\\
    3.3086 - 3.2851i& 3.5868 - 1.7117i& 2.1165 - 2.7467i& 3.1912 - 2.3206i& 2.9453 - 3.0006i& 3.1768 - 3.0497i&   2.6649\times10^{-21} - 8.2521\times10^{-22}i\\
   \end{array}
   \right].
\end{eqnarray*}
Hence
$$r_{4}=\parallel X_{1}A-D \parallel= 1\times10^{-3}\ \ and\ \ r_{5}=\parallel X_{2}A-D \parallel= 3.0008\times10^{-4},$$
where $\parallel \cdot \parallel$ is the Frobenius norm, which implies the validity of Theorem \ref{e7.1} and Theorem \ref{t7.3} in solving $(\ref{e7.1})$.
\end{example}

\section{Conclusions}
This paper gives a notion of the $m$-core-nilpotent decomposition for the complex square matrices, which is a generalization of the core-nilpotent decomposition. By applying the $m$-core-nilpotent decomposition, Drazin inverse and MP-inverse, we introduce a new generalized inverse called the $m$-WGMP inverse (see Theorem \ref{thm3.4}). Several limit representations of the $m$-WGMP inverse are efficient in terms of the computation (see Example \ref{ex5.6}). The two ways of solving $(\ref{e7.1})$ are given by the $m$-WGMP inverse (see Theorem \ref{e7.1} and Theorem \ref{t7.3}).
Further investigations deserve more attention as follows:

$(a)$ Further properties, characterizations and representations of the $m$-WGMP inverse;

$(b)$ The relationships between the $m$-WGMP inverse and other generalized inverses;

$(c)$ The applications of the $m$-WGMP inverse, such as in partial order, in other restricted matrix equations, and in symbolic computations;

$(d)$ The perturbation formulaes, iterative algorithm and splitting method for computing the $m$-WGMP inverse.

\section*{Conflict of interest}
The authors declare no conflict of interest.


\end{document}